 \newtheorem{theorem}{Theorem}[section]
 \newtheorem{corollary}[theorem]{Corollary}
 \newtheorem{lemma}[theorem]{Lemma}
 \newtheorem{proposition}[theorem]{Proposition}
 \theoremstyle{definition}
 \newtheorem{definition}[theorem]{Definition}
 \theoremstyle{remark}
 \newtheorem{remark}[theorem]{Remark}
 \numberwithin{equation}{section}
\def \no#1#2#3 {{\bf #1} (#3), #2.}
\def \eds#1#2#3 {#1, #2, #3.}
\def\d{{\rm d}}
\def\:{{\colon}}
\def\be#1{\begin{equation}\label{#1}}
\def\ee{\end{equation}}
\def\<{\langle}
\def\>{\rangle}
\def\coloneqq{:=}
\newcommand{\p}{\partial}
\newcommand{\NN}{\mathbb{N}}
\newcommand{\RR}{\mathbb{R}}
\newcommand{\eqnb}{\begin{equation}}
\newcommand{\eqne}{\end{equation}}
\newcommand\blfootnote[1]{%
  \begingroup
  \renewcommand\thefootnote{}\footnote{#1}%
  \addtocounter{footnote}{-1}%
  \endgroup
}
\begin{document}
\title{On regularity properties of a surface growth model}
\author{Jan Burczak, Wojciech S. O\.za\'nski, Gregory Seregin}
\date{\vspace{-5ex}}
\maketitle
\blfootnote{J.~Burczak: Institute of Mathematics, Polish Academy of Sciences,   \'Sniadeckich 8, 00-656 Warsaw, Poland and Mathematisches Institut, Universit\"at Leipzig, Augustusplatz 10, D-04109 Leipzig, Germany, email: burczak@math.uni-leipzig.de  \\ 
W. S. O\.za\'nski: Department of Mathematics, University of Southern California, Los Angeles, CA 90089, USA, email: ozanski@usc.edu\\
 G.~Seregin: Oxford University, UK and St Petersburg Department of Steklov Mathematical Institute, RAS, Russia, email: seregin@maths.ox.ac.uk\\
 J. Burczak was supported by MNiSW "Mobilno\'s\'c Plus" grant 1289/MOB/IV/2015/0. \\
 W. S. O\.za\'nski was supported by postdoctoral funding from ERC 616797, the AMS Simons Travel Grant and by funding from Charles Simonyi Endowment at the Institute for Advanced Study.\\
G. Seregin was supported by the grant RFBR 20-01-00397. }

\begin{abstract}
We show local higher integrability of derivative of a suitable weak solution to the surface growth model, provided a scale-invariant quantity is locally bounded. If additionally our scale-invariant quantity is small, we prove local smoothness of solutions.
\end{abstract}


\section{Introduction}
We consider the one-dimensional scalar surface growth model (SGM) 
\begin{equation}\label{SGMe}
v_t +v_{xxxx} = - \p_{xx}(v_x)^2,
\end{equation}
which is a model of epitaxial growth of monocrystals, with $v$ being the height of a crystalline layer. For more applicational motivations see \cite{Si-Pl:94b}, \cite{Ra-Li-Ha:00a}, \cite{raible_mayr_linz_moske_hanggi_samwer}, \cite{hoppe_nash} and \cite{hoppe_nash}, and for certain stochastic aspects see \cite{blomker_raible}, \cite{blomker_hairer}, \cite{blomker_gugg}, \cite{blomkerflandoliromito09} and \cite{blomker_romito_stochastic}. The analytical results for the SGM share striking similarities with the 3D incompressible Navier--Stokes equations, which has been explored in the recent years by \cite{SteWin:05}, Bl\"omker, Romito (2009, 2012)\nocite{blomkerromito09,blomkerromito12}, \cite{SGM} and \cite{O_Serrin_cond}.

The Cauchy problem associated with \eqref{SGMe} 
\begin{equation}\label{SGMC}
\begin{cases}
v_t +v_{xxxx} = - \p_{xx}(v_x)^2\qquad \text{ in } \RR \times (0,\infty ) =: Q_+\\
v(0)=v_0 \qquad \qquad \qquad \qquad  \text{ on } \RR,
\end{cases}
\end{equation}
admits the following notion of an energy weak solution
\begin{definition} 
Let $ v_0\in L_2 (\RR)$. We say that a function $v \in L_\infty(0, \infty; {L}_2 (\RR))$ such that $v_{xx}\in L_2 (0, \infty; L_2 (\RR) )$ is an {\emph{energy weak solution}} to \eqref{SGMC} provided
	\begin{itemize}
\item[(i.]  distributional formulation)
\eqnb\label{weak_form_R}
\int_0^\infty \int_\RR \left( v \phi_t - v_{xx} \phi_{xx} - v_x^2 \phi_{xx} \right) = \int_\RR v_0 \phi (0)
\eqne
for every $\phi \in C_0^\infty ((-1,\infty)\times \RR)$,	
\item[(ii.]  energy inequality) for almost every $t \ge 0$
\eqnb\label{EI_on_R} \frac 12 \int_\RR  v(t)^2+\int\limits^t_0\int_\RR v_{xx}^2 \leq \frac 12 \int_\RR v_0^2.\eqne
	\end{itemize}
\end{definition}
\begin{remark}\label{rem:str}
Any energy weak solution $v$ of \eqref{SGMC} can be modified on a set of measure zero so that the energy inequality holds for any $t \ge 0$ and 
\begin{itemize}
\item[(iii)] 	$v(t)$ is weakly continuous into $L_2$, i.e.\ for any $w\in L_2$
\[
\int_\RR v(t) w \quad \text{ is continuous on } [0, \infty),
\]
\item[(iv)] (time-truncated distributional formulation) for every $t>0$ and $\phi \in C_0^\infty ((-1,\infty )\times \RR)$
\eqnb\label{weak_form_R_with_cutoff}
\int_\RR v(t) \phi(t) + \int_0^t \int_\RR \left( v \phi_t - v_{xx} \phi_{xx} - v_x^2 \phi_{xx} \right) = \int_\RR v_0 \phi (0),
\eqne
\item[(v)] $\| v(t) - v_0 \|_{L_2 (\RR )} \to 0$ as $t\to 0^+$.
	\end{itemize}
\end{remark}
Remark \ref{rem:str} is proven in Appendix \ref{sec_app_on_weak_sol}.
\vskip 2mm
In this note we are interested in local regularity properties of weak solutions to  \eqref{SGMe}. Throughout the paper we will use the notation $z=(x,t)$ and we will denote a biparabolic cylinder centred at $z$ by
\[
Q_r (z) \coloneqq (x-r,x+r)\times (t-r^4,t].
\]
We will use the following shorthand notation for \\
(a) cylinders:
\[
 Q_r (z) = Q_r \quad \text{ as well as } \quad Q_1 (0,0) \equiv Q,
\]
where there is no danger of confusing cylinders with different centres,
and for\\
(b) the function spaces:
\[
\begin{aligned}
L_{p,q}(Q_r) := L_q(t-r^4,t;{L}_p  (x-r,x+r) ), \quad &{W}{^{k,0}_{p,q}}(Q_r) := L_q(t-r^4,t;{W}^k_p  (x-r,x+r) ),\\
\end{aligned}
\]
We will also apply the convention that any Sobolev spaces and Lebesgue spaces are considered on $\RR$ unless specified otherwise. Moreover, we write $\| \cdot \|_q := \| \cdot \|_{L^q}$, $\int  := \int_\RR$. 
We are now ready to introduce
\begin{definition}\label{def:sws}
Function $v$ is a {\emph{suitable weak solution to \eqref{SGMe} on $Q$}}, provided
	\begin{itemize}
\item[(vi)] 	$v\in L_{2, \infty}(Q)$ and $v_{xx} \in L_{2,2} (Q)$,

\item[(vii)] 	$v$ satisfies \eqref{SGMe}  in the sense of distributions on $Q$, i.e. $\int \int \left(v(\phi_t - \phi_{xxxx}) -v_x^2 \phi_{xx}\right)=0$ for every $\phi \in C_0^\infty ((-1,1)\times (-1,0))$,	
\item[(viii)] (local energy inequality) For any nonnegative $ \phi \in C_0^\infty (Q)$ and almost any $t \in (-1, 0)$ 
\eqnb\label{LEI_alt_form}\begin{aligned}
\frac{1}{2} \int_{-1}^1 v^2(t) \phi(t) + &\int_{-1}^t \int_{-1}^1 |v_{xx}|^2\phi \leq \\
\int_{-1}^t& \int_{-1}^1 \left(\frac{1}{2}(\phi_t-\phi_{xxxx})v^2
+2|v_x|^2\phi_{xx}-\frac{5}{3}v_x^3\phi_x-|v_x|^2v\phi_{xx} \right).
\end{aligned}
\eqne
	\end{itemize}
\end{definition}
In view of the definition of $Q$, the test function $\phi \in C_0^\infty (Q)$ may not vanish at $t=0$, it vanishes only in the neighbourhood of the parabolic boundary. 
\begin{remark}\label{rem:str_loc}
As in (iv), we can modify a suitable weak solution $v$ on a set of measure zero such that the generalized distributional formulation
\eqnb\label{weak_form_cylinder_with_cutoff}
\int_{-1}^1 v(t) \phi(t) + \int_{-1}^t \int_{-1}^1 \left( v \phi_t - v_{xx} \phi_{xx} - |v_x|^2 \phi_{xx} \right) =  0
\eqne
holds for all $\phi \in C_0^\infty (Q) $ and $t\in (-1,0)$, and that the local energy inequality \eqref{LEI_alt_form} holds for every $t\in (-1,0)$.
\end{remark}
Definition \ref{def:sws} admits the obvious generalisation to an arbitrary cylinder $Q_r (z)$. 

The definitions of weak energy solution and of suitable weak solution are not artificial. Indeed, integration by parts yields the cancellation
\[
\int_{\RR } v\,\p_{xx} (v_x)^2  = - \int_{\RR }  (v_x)^2 v_{xx} =0,
\]
which enables the following existence result.
\begin{theorem}\label{thm:Cauchy}
Given $v_0\in L_2 (\RR)$ there exists an energy weak solution to the Cauchy problem \eqref{SGMC} that is a suitable weak solution on every cylinder  $Q_r (z)$.
\end{theorem}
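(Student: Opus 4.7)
The plan is to construct a global solution by smooth approximation and limit passage, verifying both the global energy inequality and the local energy inequality \eqref{LEI_alt_form} in the limit. First, I would take mollified, compactly supported data $v_0^{(n)} \in C_c^\infty(\RR)$ with $v_0^{(n)} \to v_0$ in $L_2(\RR)$ and construct smooth global approximate solutions $v^{(n)}$ by a standard scheme---e.g.\ a Galerkin projection, solving the equation on tori $\TT_{R_n}$ with $R_n \to \infty$, or adding a stabilising higher-order regulariser. Testing the equation by $v^{(n)}$ and exploiting the cancellation
\[
\int_{\RR} v^{(n)}\p_{xx}(v^{(n)}_x)^2 \,\d x = -\int_{\RR} (v^{(n)}_x)^2 v^{(n)}_{xx}\,\d x = -\tfrac{1}{3}\int_{\RR} \p_x (v^{(n)}_x)^3\,\d x = 0
\]
provides global existence at each approximation level together with the uniform bounds $v^{(n)} \in L_\infty(0,\infty;L_2)$ and $v^{(n)}_{xx} \in L_2(0,\infty;L_2)$. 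The Gagliardo--Nirenberg estimate $\|v_x\|_{10/3}^{10/3} \lesssim \|v\|_2^{4/3}\|v_{xx}\|_2^2$ then yields $v^{(n)}_x$ uniformly bounded in $L_{10/3,10/3}$ on every bounded space--time region.

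Next, I would use the equation to bound $\p_t v^{(n)}$ uniformly in $L_{5/3}(0,T;H^{-2}_{\mathrm{loc}})$ and apply Aubin--Lions compactness (via the embedding $H^2_{\mathrm{loc}}\Subset H^1_{\mathrm{loc}}\hookrightarrow H^{-2}_{\mathrm{loc}}$) to extract a subsequence converging to some $v$ strongly in $L_2(0,T;H^1_{\mathrm{loc}})$, weakly-$*$ in $L_\infty L_2$, and weakly in $L_2H^2$. Interpolating the resulting strong $L_2$ convergence of $v^{(n)}_x$ with its uniform $L_{10/3}$-bound upgrades this to strong space--time convergence $v^{(n)}_x \to v_x$ in $L_{3,\mathrm{loc}}$. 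This is enough to pass to the limit in every term of the distributional identity \eqref{weak_form_R}, producing an energy weak solution $v$; the global energy inequality \eqref{EI_on_R} then follows from weak lower semicontinuity of the dissipation, and properties (iii)--(v) follow from Remark~\ref{rem:str}.

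For suitability on an arbitrary cylinder $Q_r(z)$, I would multiply the smooth equation for $v^{(n)}$ by $v^{(n)}\phi$ with $\phi \in C_0^\infty(Q_r(z))$ nonnegative, and integrate by parts to obtain
\begin{align*}
\int v\, v_{xxxx}\,\phi &= \int v_{xx}^2 \phi - 2\int v_x^2 \phi_{xx} + \tfrac{1}{2}\int v^2 \phi_{xxxx},\\
-\int \p_{xx}(v_x^2)\, v\,\phi &= -\tfrac{5}{3}\int v_x^3 \phi_x - \int v\, v_x^2 \phi_{xx},
\end{align*}
where the second line uses $\int v_x^2 v_{xx}\phi = -\tfrac{1}{3}\int v_x^3\phi_x$. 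These identities show that \eqref{LEI_alt_form} holds with \emph{equality} for each $v^{(n)}$. Passing to the limit, the dissipation $\int\!\!\int v_{xx}^2\phi$ is weakly lower semicontinuous (since $\phi\ge 0$), while every remaining term converges by the strong $L_{3,\mathrm{loc}}$ compactness of $v_x^{(n)}$ and the strong $L_{2,\mathrm{loc}}$ compactness of $v^{(n)}$. The main technical obstacle is precisely the cubic term $\int\!\!\int v_x^3 \phi_x$: no analogue of the cancellation exploited for the global energy identity is available in the localised setting, so one must rely on hard compactness---the $L_{10/3}$ Gagliardo--Nirenberg bound combined with the time-derivative estimate---to secure strong $L_3$ convergence of $v_x^{(n)}$.
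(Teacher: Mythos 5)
Your overall strategy---approximation from compactly supported data, the energy cancellation, the Gagliardo--Nirenberg bound for $v_x$ in $L_{10/3,10/3}$, Aubin--Lions compactness leading to strong $L_{3,\mathrm{loc}}$ convergence of $v^{(n)}_x$, and weak lower semicontinuity for the dissipation $\int\!\!\int v_{xx}^2\phi$---matches the paper's approach, and your integration-by-parts identities leading to the local energy balance are correct.

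There is, however, a genuine gap at the very first step: you assert that ``smooth global approximate solutions $v^{(n)}$'' exist and that \eqref{LEI_alt_form} holds for them with \emph{equality}. This requires the $v^{(n)}$ to be classical solutions of the exact PDE on all of $[0,\infty)$, which the energy bounds alone do not give you---the equation is supercritical, as the paper stresses, and global smoothness is precisely what is unknown. Each scheme you list carries an obstacle that must be resolved before the local energy identity can be asserted: a Galerkin projection does not solve the PDE pointwise, so you cannot legitimately test the Galerkin equation against $v^{(n)}\phi$ (the resulting projection error is exactly the classical obstruction to deriving suitability from a Galerkin construction, well known for 3D Navier--Stokes); solutions on tori $\TT_{R_n}$ are themselves only weak solutions with the same regularity problem; and adding a higher-order regulariser introduces an extra term in the local energy balance whose sign or vanishing in the limit must be checked. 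The paper sidesteps all of this by invoking \cite{SteWin:05} and \cite{SGM} for the existence of \emph{suitable weak} (not smooth) solutions of the Dirichlet problem on $(-l,l)$, which already come equipped with the local energy inequality \eqref{LEI_interval}; the proof then concentrates on the passage $l\to\infty$, which your compactness argument reproduces correctly. To close the gap you would either need to cite the same prior construction, or pick one of your schemes and verify that the approximate local energy \emph{in}equality survives the limit, rather than claiming equality for the approximations.
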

For the sake of completeness, the proof of Theorem \ref{thm:Cauchy} is given in Appendix \ref{sec_app_on_weak_sol}.
\subsection{Supercriticality}
Our equation \eqref{SGMe} enjoys invariance under the scaling
\begin{equation}\label{sc}
v^\lambda (x,t)= v (\lambda x, \lambda^4 t)
\end{equation}
and its total energy
\[
E (v) \coloneqq \sup_{t \ge 0} \int_\RR |v(t)|^2 + 2\int_0^\infty \int_\RR |v_{xx} (s) |^2 \d s
\] 
vanishes on small scales, i.e.\ $E (v^\lambda)= \lambda^{-1} E (v) \to 0$ as $\lambda \to \infty$. In this the sense equation \eqref{SGMe} is supercritical, hence one expects that standard methods, e.g.\ a perturbation of linear theory, do not provide a satisfactory answer to well-posedness in the large of \eqref{SGMC}. Observe that the small-scale decay of $E$ occurs at the same rate $\lambda^{-1}$ as in the case of 3D Navier-Stokes Equations  under its scaling. This essentially leads to certain similarities between \eqref{SGMe} and 3D Navier-Stokes Equations, as discussed in the literature.

\subsection{Result}
We provide regularity statements in relation to the quantity
\begin{equation}\label{eq:sinv}
I (z,R) \coloneqq \frac{1}{R^2}  \int_{Q_R (z)} |v_x|^3.
\end{equation}
We note that $I(z,R)$ is invariant with respect to the scaling \eqref{sc} and it has been studied in the previous work \cite{SGM}, where smoothness of $v$ in $Q_{R/2}(z)$ has been deduced from smallness of $I(z,R)$.
In our main result, i.e.\ Theorem \ref{thm_main} below, we improve integrability of a suitable weak solution for which $I$ is bounded on small scales, and we show local smoothness if $I$ is small.

We will denote by $C_\alpha$ any positive constant that depends on some parameter $\alpha$.
 \begin{theorem}\label{thm_main}
Let $v$ be a suitable weak solution of the SGM \eqref{SGMe} on  $Q$. \\
(i) (higher integrability) If there exists $M <\infty$ such that
\begin{equation}\label{eq:bass}
\sup_{Q_R (z) \subset Q} \frac{1}{R^2}  \int_{Q_R (z)} |v_x|^3  \leq M,
\end{equation}
then there exists $\delta_0= \delta_0 (M) >0$ such that $v_x \in L_{\frac{10+\delta_0}{3} ,\frac{10+\delta_0}{3}} (Q_\frac{1}{2})$ with
\begin{equation}\label{eq:geh}
 \left( \int_{Q_\frac{1}{2}} | v_{x}|^\frac{10+\delta_0}{3}\right)^\frac{3}{10 + \delta_0} \le C_M  \left(\int_{Q} |v_{x}|^\frac{10}{3}\right)^\frac{3}{10} \quad \left(\le C C_M \left(\|v\|_{2, \infty; Q} + \|v_{xx}\|_{2,2; Q}  \right) \right)
\end{equation}
(ii) ($\varepsilon$-regularity) Given $\gamma \in (0,1)$ there exists $\varepsilon >0$ such that if 
\begin{equation}\label{eq:sass}
I (0,1)\equiv   \int_{Q_1 } |v_x|^3\leq \varepsilon,
\end{equation}
then 
\begin{equation}\label{eq:hcon}
\begin{aligned}
&\left| v_x (x,t) - v_x (y,s) \right| \leq C_\gamma \varepsilon^{1/3}  \left( |x-y| + |t-s |^{1/4} \right)^\gamma \quad \text{ for } (x,t),(y,s)\in Q_{\frac{1}{2}}
\end{aligned}
\end{equation}
\end{theorem}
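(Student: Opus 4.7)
The plan is to establish (i) by a Caccioppoli--Gehring reverse H\"older argument built on the local energy inequality \eqref{LEI_alt_form} and the hypothesis \eqref{eq:bass}, and (ii) by iterating a scale-invariant decay estimate obtained via comparison with the linear biharmonic heat flow, with a Campanato characterization of H\"older continuity at the end.

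For part (i), the first step is to test \eqref{LEI_alt_form} with a smooth cutoff $\phi$ supported in $Q_{2r}(z)\subset Q$ and equal to one on $Q_r(z)$, producing a Caccioppoli-type estimate of the form
\[
\sup_{t}\int v^2(t)\phi(t) + \int_{Q_r}|v_{xx}|^2 \lesssim \frac{1}{r^4}\int_{Q_{2r}}v^2 + \frac{1}{r^2}\int_{Q_{2r}}|v_x|^2 + \frac{1}{r}\int_{Q_{2r}}|v_x|^3 + \frac{1}{r^2}\int_{Q_{2r}}|v_x|^2|v|,
\]
where the cubic and mixed terms on the right are controlled via \eqref{eq:bass} (directly, and by Young's inequality, respectively). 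The second step is the parabolic Gagliardo--Nirenberg inequality
\[
\int\|v_x(t)\|_{L^{10/3}}^{10/3}\,\d t \lesssim \bigl(\sup_{t}\|v(t)\|_{L^2}\bigr)^{4/3}\int\|v_{xx}(t)\|_{L^2}^2\,\d t,
\]
applied on $Q_r$ after subtracting a spatial mean, which combined with the Caccioppoli bound yields a reverse H\"older inequality of the shape
\[
\Bigl(\tfrac{1}{|Q_r|}\int_{Q_r}|v_x|^{10/3}\Bigr)^{3/10} \le C_M\Bigl(\tfrac{1}{|Q_{2r}|}\int_{Q_{2r}}|v_x|^q\Bigr)^{1/q}
\]
for some $q<10/3$. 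The third and final step is to invoke the parabolic Gehring--Giaquinta--Modica lemma, which produces $\delta_0=\delta_0(M)>0$ together with \eqref{eq:geh}; the rightmost bound in \eqref{eq:geh} is a further application of Gagliardo--Nirenberg.

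For part (ii), I would combine (i) with a Morrey--Campanato iteration. Set
\[
\Phi(z_0,r) \coloneqq \frac{1}{r^2}\int_{Q_r(z_0)}|v_x|^3,
\]
which is scale-invariant and satisfies $\Phi(0,1)\le\varepsilon$ by \eqref{eq:sass}. The key decay estimate to prove is that there exist $\theta\in(0,1)$ and a universal $\alpha>0$ with
\[
\Phi(z_0,\theta r) \le \theta^{\alpha}\Phi(z_0,r) + C\varepsilon^{\beta}
\]
for every $z_0\in Q_{1/2}$ and $r\le 1/2$. I would prove this by comparing $v$ on $Q_r(z_0)$ with the solution $h$ of $h_t+h_{xxxx}=0$ that shares the parabolic boundary data of $v$; interior regularity for the biharmonic heat equation gives the $\theta^\alpha$-decay of $\Phi$ for $h$, while the correction $w=v-h$ satisfies a linear equation with right-hand side $-\partial_{xx}(v_x)^2$ whose $L^3$-norm on $Q_r(z_0)$ is made small by invoking the improved integrability \eqref{eq:geh} from (i) and H\"older's inequality to gain the factor $\varepsilon^{\beta}$. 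Iterating and tuning $\theta$ to the desired $\gamma$ produces a Morrey bound $\Phi(z_0,r)\lesssim r^{3\gamma'}$ with $\gamma'$ arbitrarily close to $1$, and the Campanato characterization applied to $v_x$ delivers \eqref{eq:hcon}.

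The main obstacle in both parts is the critical-scaling character of the cubic term $v_x^3\phi_x$ in \eqref{LEI_alt_form}. In part (i), this term has the same scale as the target $L^{10/3}$-norm of $v_x$, so it cannot be absorbed by a naive Young inequality; the Gagliardo--Nirenberg interpolation, in which $\|v_{xx}\|_{L^2}$ enters at a power strictly less than one against $\|v\|_{L^2}$, is precisely what allows one to push the cubic contribution onto the right-hand side at an exponent strictly smaller than $10/3$ and thereby apply Gehring. In part (ii) the same nonlinearity reappears as the forcing $\partial_{xx}(v_x)^2$ for the corrector $w=v-h$; because this source only lives in a negative Sobolev space, closing the decay with a genuinely small prefactor $\varepsilon^{\beta}$ requires using (i) to upgrade $v_x\in L^3$ to $L^{(10+\delta_0)/3}$ on intermediate scales, which is the technical crux of the whole argument.
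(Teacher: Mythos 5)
Your overall architecture for part (i)---Caccioppoli, Gagliardo--Nirenberg interpolation, Gehring---matches the paper's, but there is a genuine gap in the Caccioppoli step. The estimate you write down retains the term $\tfrac{1}{r^4}\int_{Q_{2r}} v^2$ on the right-hand side, and a term of this kind can \emph{not} be controlled by $\int |v_x|^q$, so the subsequent reverse H\"older inequality (which must involve only $v_x$) will not close. You mention ``subtracting a spatial mean'' at the Gagliardo--Nirenberg step, but that is too late: the mean must be subtracted in the local energy inequality itself, and since the natural candidate is the time-dependent mean $c(t)=\langle v(t)\rangle_{x_0,R}$, the function $\hat v=v-c(t)$ satisfies the equation only up to an extra forcing $c'(t)$. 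The crux of the paper's Caccioppoli proof is showing that this extra contribution vanishes, by verifying that $\phi=c(t)\eta$ is an admissible test function in \eqref{weak_form_cylinder_with_cutoff} (this in turn requires the mild regularity $c\in W^{1,5/3}$ obtained from \eqref{eq:mmv1}), whence $I=0$ in \eqref{eq:pcac12}. Without this cancellation the $v^2$ term remains and the argument breaks at the reverse H\"older stage.

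Your plan for part (ii) has two problems. First, the comparison flow you propose, $h_t+h_{xxxx}=0$, is explicitly what the earlier paper \cite{SGM} used, and the present paper's whole point is that this is the \emph{wrong} linearisation: when $(v_x)_r\approx\beta\neq0$, the nonlinearity $\partial_{xx}(v_x)^2\approx2\beta\,v_{xxx}$ is order one and cannot be treated as a small forcing on the corrector $w=v-h$, regardless of the higher integrability from (i). The paper compares instead with $u_t+u_{xxxx}+\beta u_{xxx}=0$ (Lemma~\ref{lem_campanato_linear}), which absorbs precisely this non-small drift, and it does so by a compactness/blow-up argument (Sections~\ref{sec_step0}--\ref{sec_step3}) rather than a direct perturbation estimate, so it never needs part~(i) as input. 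Second, your decay quantity $\Phi(z_0,r)=r^{-2}\int_{Q_r}|v_x|^3$ measures the size of $v_x$ rather than its oscillation; if $v_x$ were a nonzero constant, $\Phi$ would not decay, yet $v_x$ is trivially H\"older continuous. The Campanato characterization needs decay of $\big(\fint_{Q_r}|v_x-(v_x)_r|^3\big)^{1/3}$, which is the quantity $Y(r,v)$ iterated in Lemma~\ref{lem_iteration} and Corollary~\ref{cor_iterate_in_theta}; a Morrey bound on $\Phi$ would at best give H\"older continuity of $v$, not of $v_x$, which is exactly the shortcoming of the earlier result that Theorem~\ref{thm_main}(ii) is designed to fix.
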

Let us note that:
\begin{itemize}
\item The higher integrability result \eqref{eq:geh} does not follow from regularity of a suitable weak solution by interpolation etc.
\item Our result holds for an arbitrary $\gamma \in (0,1)$, unlike in the case of Navier-Stokes Equations (compare Theorem 3.1 in \cite{lin} and \cite{ladyzhenskaya_seregin}), thanks to lack of pressure-related difficulties.
\item \cite{SGM} showed that smallness of $I$ implies H\"older continuity of $v$ (rather than of $v_x$), which is not sufficient to further bootstrap the regularity of $v$ and exhibits a mismatch between an assumption involving $v_x$ and a result for $v$. Part (ii) fills this gap, as \eqref{eq:hcon} guarantees smoothness. Indeed, using for example the regularity condition of \cite{O_Serrin_cond}, one has
 \begin{corollary}
Under assumptions of Theorem \ref{thm_main} (ii),
$v \in C^\infty ( Q_{1/8} )$.
 \end{corollary}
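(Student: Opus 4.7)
The plan is to deduce the corollary by combining part (ii) of Theorem \ref{thm_main} with the Serrin-type criterion of \cite{O_Serrin_cond}, so that the argument reduces to three short bootstrap steps.

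First, I would invoke Theorem \ref{thm_main}(ii) with any fixed $\gamma \in (0,1)$ (say $\gamma = 1/2$). Assuming $\varepsilon$ has been chosen small enough for that $\gamma$, estimate \eqref{eq:hcon} yields that $v_x$ is H\"older continuous on $Q_{1/2}$. In particular,
\[
\| v_x \|_{L_\infty(Q_{1/2})} \le C_\gamma \varepsilon^{1/3} + |v_x(z_0)|
\]
for some suitable reference point $z_0 \in Q_{1/2}$, and the latter value is finite by the continuity provided by \eqref{eq:hcon} (after using that $v_x \in L_3(Q_{1/2})$ by \eqref{eq:sass}, so $v_x$ cannot be identically $+\infty$). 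Hence $v_x \in L_\infty(Q_{1/2})$.

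Second, I would apply the Serrin-type regularity criterion of \cite{O_Serrin_cond}, which asserts that if $v$ is a suitable weak solution of \eqref{SGMe} and $v_x$ lies in an appropriate subcritical Lebesgue class $L_{p,q}$ on an open set, then $v$ is $C^\infty$ on any strictly smaller parabolic subcylinder. Since the $L_\infty$ bound obtained in the first step places $v_x$ in every such class on $Q_{1/2}$, this criterion applies and yields $v \in C^\infty(Q_{1/4})$, and a fortiori $v \in C^\infty(Q_{1/8})$.

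The only nontrivial point is justifying the pointwise bound on $v_x$ from the H\"older estimate \eqref{eq:hcon}, which alone controls only differences of $v_x$; this is handled as above by combining it with the integral bound \eqref{eq:sass} on $|v_x|^3$. Everything else is a direct citation, so there is no serious obstacle; the corollary is essentially immediate once part (ii) of Theorem \ref{thm_main} is in hand.
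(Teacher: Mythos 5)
Your proposal matches the paper's intended argument: the paper itself merely notes that the corollary follows ``using for example the regularity condition of \cite{O_Serrin_cond},'' which is exactly the route you take. Your added observation that the H\"older estimate \eqref{eq:hcon} together with $v_x \in L_3(Q_1)$ yields $v_x \in L_\infty(Q_{1/2})$, and hence membership in any subcritical Serrin class, is a correct and reasonable way to fill in the (unwritten) details.
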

\item One can also provide respective partial regularity results based on Theorem  \ref{thm_main}.
 In particular, denoting by 
\[\begin{split}
S &\coloneqq \left\{ (x,t) \in \RR \times (0,\infty ) \colon v\text{ is not infinitely differentiable }\right.\\
&\hspace{5cm}\left.\text{ on any neighbourhood } U \ni (x,t) \right\}
\end{split}
\]
the singular set of a suitable weak solution $v$, one can deduce from part (ii) that $\mathcal{P}^1 (S) =0$ and that $d_B (S \cap K) \leq 7/6 $ for every compact $K\subset \RR \times (0,\infty )$, where $\mathcal{P}^1$ denotes the one-dimensional biparabolic Hausdorff measure 
and $d_B$ stands for the box-counting dimension. This improves the conclusion of \cite{SGM}, who showed these estimates with $S$ replaced by the set of points where $v$ is not H\"older continuous (a subset of $S$).  
\end{itemize}

The remaining part of this note is devoted to proof of Theorem \ref{thm_main}.


\section{Auxiliary tools}
Here we gather two tools needed further: a Campanato-type estimate for a linear equation and a multiplicative inequality. 
\subsection{The linear equation}\label{ssec:prl}
 Consider 
\begin{equation}\label{eq:linb}
u_t + u_{xxxx} + \beta u_{xxx} =0,
\end{equation}
where $\beta \in \RR$ is a parameter. We will use \eqref{eq:linb} as a limiting system in our blowup-type proof of the $\varepsilon$-regularity result (i.e. Theorem \ref{thm_main} (ii)). In fact, one of the main purposes of this work is to demonstrate that \eqref{eq:linb} is a more optimal linearisation of the surface growth model \eqref{SGMe} than the biharmonic heat equation $u_t+u_{xxxx}=0$ (which was used by \cite{SGM}). In fact, if one of the $v_x$'s in $(v_x)^2$ in \eqref{SGMe} is replaced by a constant $\beta$, then the nonlinearity becomes $\beta \,v_{xxx}$, which motivates \eqref{eq:linb}. In this section we discuss a Campanato-type estimate for \eqref{eq:linb}, which will be later used in closing the blowup-type argument (in Section \ref{sec_blowup_limit_eq}).
\begin{lemma}[Campanato-type estimate for the linear equation]\label{lem_campanato_linear}
Suppose that $u\in L_{2,2} (Q_1)$ with $u_x \in L_{2,2} (Q_1)$ satisfies \eqref{eq:linb} in the sense of distributions. Then for any $\theta \in (0,1/2)$, $p\geq 1$,
\eqnb\label{campanato_linear}
\left( \fint_{Q_\theta} \left| u_x - (u_x)_\theta \right|^p \right)^{1/p} \leq c(1+\beta) \, \theta \|u_x\|_{L_{2,2}(Q_1)},
\eqne
where $c>1$ is a universal constant and $(u_x)_\theta\coloneqq \fint_{Q_\theta} u_x $.
\end{lemma}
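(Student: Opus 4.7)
The plan is to exploit that \eqref{eq:linb} is a linear fourth-order parabolic PDE with constant coefficients, hence hypoelliptic: any distributional solution with $u, u_x \in L_{2,2}(Q_1)$ is automatically smooth in the interior, and differentiating in $x$ shows that $w := u_x$ solves \eqref{eq:linb} as well. Thus one may work with a smooth $w$ from the start.

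The quantitative heart of the proof is an interior regularity estimate of the form
\[
\|w_x\|_{L^\infty(Q_{1/2})} + \|w_t\|_{L^\infty(Q_{1/2})} \leq c\,(1+\beta)\,\|w\|_{L_{2,2}(Q_1)},
\]
which I would establish via iterated Caccioppoli-type bounds. Pair \eqref{eq:linb} against $\eta^2\, \partial_x^{k} w$ with $\eta$ a smooth cutoff supported in a slightly larger cylinder: the principal part $\partial_x^4$ produces, after integration by parts, a coercive $\int \eta^2 |\partial_x^{k+2} w|^2$ term, while the lower-order contribution coming from $\beta\partial_x^3 w$ is absorbed via integration by parts and Young's inequality at a cost linear in $\beta$, since $\beta\partial_x^3$ is strictly subprincipal to $\partial_x^4$ (the absorption parameter can be tuned so that each step picks up only one power of $\beta$). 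Iterating in $k$ yields $L_{2,2}$ bounds on arbitrary spatial derivatives of $w$ on slightly smaller cylinders; the equation then converts these into bounds on $w_t$. Sobolev embedding in the spatial direction together with continuity of $w$ in $t \mapsto L^2_x$ finally upgrades these to pointwise $L^\infty$ control of $w_x$ and $w_t$.

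With the interior estimate in hand, the Campanato bound follows from a biparabolic mean-value inequality: for $z=(x,t)$ and $z'=(x',t')$ in $Q_\theta$ with $\theta<1/2$ one has $|x-x'|\le 2\theta$ and $|t-t'|\le \theta^4 \le \theta$, so
\[
|w(z)-w(z')| \leq \|w_x\|_{L^\infty(Q_{1/2})}|x-x'| + \|w_t\|_{L^\infty(Q_{1/2})}|t-t'| \leq c\,(1+\beta)\,\theta\,\|w\|_{L_{2,2}(Q_1)}.
\]
Averaging over $z'\in Q_\theta$ bounds $|w(z) - (w)_\theta|$ by the same right-hand side, which is independent of $z$; raising to the $p$th power and averaging over $z$ then yields \eqref{campanato_linear} immediately for all $p \geq 1$.

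The main obstacle is preserving the linear $\beta$-dependence through the Caccioppoli iteration, because naive Young's-inequality absorption can easily produce $\beta^2$ factors. The linear scaling is saved by the principal/subprincipal dimensional gap between $\partial_x^4$ and $\beta\partial_x^3$: choosing absorption parameters proportional to $1/\beta$ at each step consumes only one power of $\beta$ rather than two. (Equivalently, the Fourier symbol $\xi^4 - i\beta\xi^3$ is dominated by $\xi^4$ for $|\xi|\gtrsim \beta$, and the contribution from $|\xi|\lesssim \beta$ is linear in $\beta$.) Everything else is standard linear parabolic regularity.
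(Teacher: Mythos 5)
Your proposal is correct in its essentials but takes a genuinely different route to the key point of the lemma, which (as the paper emphasizes in the remark immediately following its statement) is that the right-hand side of \eqref{campanato_linear} involves only $\|u_x\|_{L_{2,2}(Q_1)}$ and not the full $\|u\|_{L_{2,2}(Q_1)}+\|u_x\|_{L_{2,2}(Q_1)}$ that a naive Caccioppoli estimate would produce. The paper first derives, by energy estimates and bootstrap, a Campanato bound for $u_x$ with $\|u\|_{L_{2,2}(Q_{3/4})}+\|u_x\|_{L_{2,2}(Q_{3/4})}$ on the right, and then reruns the entire argument with $u$ replaced by the time-dependent oscillation $\hat u = u - \langle u(t)\rangle$, controlling $\|\hat u\|_{L_{2,2}}$ by $\|u_x\|_{L_{2,2}}$ via Poincar\'e --- the same device that drives the nonlinear Caccioppoli inequality \eqref{eq:cacc}. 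You instead observe at the outset that $w := u_x$ itself solves \eqref{eq:linb}, run interior regularity directly on $w$, and thereby obtain the Campanato bound with $\|w\|_{L_{2,2}(Q_1)}=\|u_x\|_{L_{2,2}(Q_1)}$ on the right without ever introducing $\hat u$. This is cleaner and exploits linearity more efficiently; the paper does use the fact that spatial derivatives of $u$ solve \eqref{eq:linb}, but only as a bootstrap device, not as the organizing principle. One caveat applies equally to both proofs: neither carefully tracks the claimed linear dependence on $\beta$. The paper writes $C_\beta$ and $C_{k,m,\beta}$ without justifying that they are $O(1+\beta)$, and you correctly flag the absorption step as the delicate point --- if one eliminates the intermediate $\|w_x\|$ term by interpolation with parameter $\epsilon\sim 1/(1+\beta)$ rather than by the paper's $\hat u$ device, each absorption tends to cost $(1+\beta)^2$, and the bootstrap compounds this. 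This imprecision is harmless for the application, since the lemma is invoked only with $\beta\in[-M,M]$ and any fixed polynomial in $\beta$ is then absorbed into $C_M$, but strictly proving the stated $c(1+\beta)$ would require more care in either approach (for instance a rescaling argument in $\beta$).
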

\begin{proof} See Appendix \ref{sec_proof_linear_camp}.
\end{proof}
\begin{remark}
Observe that there is no lowest-order term $\|u\|_{L_{2,2}(Q_1)}$ present on the right-hand side of \eqref{campanato_linear}, which the simplest energy estimate would dictate. Instead \eqref{campanato_linear} follows from an introduction of time-dependent oscillations in estimates, inspired by \cite{GiaquintaStruwe82}, see also \cite{SereginSverakSilvestreZlatos}.
\end{remark}
\subsection{Multiplicative inequality}
In the proof of Theorem \ref{thm_main} (ii) (i.e. our $\varepsilon$-regularity result) it is sufficient to use standard interpolation. However for the proof of Theorem \ref{thm_main} (i) we need the following more precise inequality. Recall $Q_+\coloneqq \RR \times (0,\infty )$.
\begin{proposition}\label{prop_mhi} Let $U = Q_r (z)$ or $U = Q_+$. There exists a constant $C>0$ (independent of $U $) such that 
\begin{equation}\label{eq:mhi}
\|f_x\|_{L_{\frac{10}{3}, \frac{10}{3}} ( U)} \le C \|f\|^\frac{2}{5}_{L_{2, \infty}( U )}  \|f_{xx}\|^\frac{3}{5}_{L_{2,2} ( U)} 
\end{equation}
for every $f\in L_{2, \infty}(U)$ with $f_{xx}\in L_{2,2}(U)$ such that, in the case of $U=Q_r (z)$, $f$ is compactly supported in $(x-r, x+r)$. 
\end{proposition}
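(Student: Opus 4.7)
The plan is to derive the inequality by combining a one-dimensional Gagliardo--Nirenberg estimate in space with a Hölder-in-time step. Since $f$ is compactly supported in the spatial slice $(x-r,x+r)$ in the case $U = Q_r(z)$, we may extend $f(\cdot, t)$ by zero to all of $\RR$ for each fixed $t$ without changing any of the norms involved, so it suffices to prove the inequality when the spatial domain is $\RR$.

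First I would apply the Gagliardo--Nirenberg inequality on $\RR$ at fixed time $t$. Solving the scaling relation $\frac{1}{p} = j + \theta\left(\frac{1}{r} - m\right) + \frac{1-\theta}{q}$ with $j=1$, $m=2$, $r=q=2$, $p=10/3$ yields $\theta = 3/5$, and therefore
\begin{equation*}
\|f_x(\cdot,t)\|_{L^{10/3}(\RR)} \le C \|f_{xx}(\cdot,t)\|_{L^2(\RR)}^{3/5} \|f(\cdot,t)\|_{L^2(\RR)}^{2/5}.
\end{equation*}
Raising to the $10/3$ power (note that $(3/5)\cdot (10/3)=2$ and $(2/5)\cdot (10/3)=4/3$) gives
\begin{equation*}
\|f_x(\cdot,t)\|_{L^{10/3}(\RR)}^{10/3} \le C \|f_{xx}(\cdot,t)\|_{L^2(\RR)}^{2}\, \|f(\cdot,t)\|_{L^2(\RR)}^{4/3}.
\end{equation*}

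Next I would integrate this inequality in $t$ over the time slice of $U$ (either $(0,\infty)$ or $(t_0-r^4,t_0]$) and pull the spatial-$L^2$ factor out as an essential supremum:
\begin{equation*}
\|f_x\|_{L_{10/3,10/3}(U)}^{10/3} \le C \|f\|_{L_{2,\infty}(U)}^{4/3}\, \|f_{xx}\|_{L_{2,2}(U)}^{2}.
\end{equation*}
Taking the $3/10$-th root yields exactly \eqref{eq:mhi}. The constant $C$ depends only on the one-dimensional Gagliardo--Nirenberg constant and therefore is independent of $U$.

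There is no real obstacle here, only the matching of exponents. The only subtlety worth flagging is that the hypothesis of compact spatial support on $Q_r(z)$ is used solely to legitimise the extension-by-zero to $\RR$ so that the scaling-invariant, whole-line Gagliardo--Nirenberg inequality applies with a domain-independent constant; without it, one would inherit a cutoff or an additional lower-order term on the right-hand side.
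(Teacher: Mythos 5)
Your proof is correct and takes essentially the same route as the paper: the paper applies the same one-dimensional interpolation inequality (there attributed to Gabushin, coinciding with the Gagliardo--Nirenberg estimate you derived with $\theta=3/5$) at each time slice and then integrates the $10/3$ power in time, exactly as you do. The compact-support hypothesis plays the same role in both arguments — it allows extension by zero (equivalently, application of the inequality on an interval with a universal constant) so that the constant is independent of $U$.
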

We note that the compact support requirement may be relaxed to the condition that $f$ and $f_x$ vanish at at least one point in space (at almost every $t$) or the condition that they vanish in the sense of spatial average. 
\begin{proof}[Proof of Proposition \ref{prop_mhi}]
From the Gabushin inequality 
\[
\|u_x\|_{\frac{10}{3}} \le C \|u\|^\frac{2}{5}_{2}  \|u_{xx}\|^\frac{3}{5}_{2},
\]
(cf.\ Theorem 1.4 of \cite{KwongZettl}, p.12, or the original \cite{Gab67}) and therefore also on an  interval $I$, provided $u$ is compactly supported there (it actually suffices that $u$ and $u_x$ vanish somewhere on $I$, cf \cite{KwongZettl}, Corollary 1.1, p. 21). Hence by density argument we have the same result at almost every\ $t$ for $u(t)$, where $u \in {W}{_{2,2}^{2,0}}(U)$. Taking both sides to power $10/3$ and integrating in time yields \eqref{eq:mhi}.
\end{proof} 

\section{Caccioppoli inequality}
Let $\varphi_0 \in C_0^\infty (-1,1)$ be a nonnegative and even cut-off function, such that $\varphi_0 =1$ on $(-1/2,1/2)$, and, given $x_0\in \RR$ and $R>0$ let 
\[\varphi_{x_0, R} (x) \coloneqq \varphi_0 ((x-x_0)/R). \]
We introduce the $\varphi_0$\emph{-related mean} of a function $f$
\begin{equation}\label{eq:mmv}
\langle f\rangle_{x_0,  R}  \coloneqq \int_{x_0-R}^{x_0+R} f \, \varphi_{x_0,  R}  \, \bigg(\,  \int_{x_0-R}^{x_0+R}  \varphi_{x_0,  R} \bigg)^{-1}.
\end{equation}
\begin{remark}\label{rem:ct}
If $v$ is a suitable weak solution on $Q_R$ then $\p_t \langle v\rangle_{x_0,R}\in L_{5/3} (t_0-R^4,t_0)$ (in particular $\langle v\rangle_{x_0,R}(t)$ is a continuous function of $t$). Indeed, abbreviating $\varphi\coloneqq \varphi_{x_0,R}$, we have for every $\psi \in C_0^\infty (t_0-R^4,t_0)$ via the distributional formulation (vii)
\eqnb\label{eq:mmv1}\begin{split}
\left| \int_{t_0-R^4}^{t_0} \langle v\rangle_{x_0,R} (t) \psi' (t) \d t \right| &= \frac{1}{\int_{x_0-R}^{x_0+R} \varphi } \left| \int_{Q_R} v \varphi \psi' \right| = C_\varphi \left| \int_{Q_R} v_{x}^2 \varphi_{xx} \psi - v_x  \varphi_{xxx} \psi \right| \\
\leq &C_\varphi \| v_x \|_{L_{10/3,10/3} (Q_R)}^2 \| \psi \|_{L_{5/2} (t_0-R^4,t_0)} \\
&+C_\varphi \| v_x \|_{L_{10/3,10/3} (Q_R)} \| \psi \|_{L_{10/7} (t_0-R^4,t_0)} \\
\leq &C_{\varphi,R,v }   \| \psi \|_{L_{5/2} (t_0-R^4,t_0)},
\end{split}
\eqne
 where we used \eqref{eq:mhi} and (vi) in the last line.
\end{remark}
We will also use a smooth nonnegative time cut-off function $\chi_0 \in C^\infty (\RR )$ such that $\chi_0 (t) \equiv 0 $ for $t\leq -1$ and $\chi_0 \equiv 1 $ for $t\geq -1/16$. Let 
\[
\chi_{t_0,R} (t) \coloneqq \chi_0 ((t-t_0)/R^4),
\]
then $\chi_{t_0, R}(t) \equiv 0 \;\text{ for }\; t \leq t_0-R^4$ and $\chi_{t_0, R}(t) \equiv 1 \;\text{ for }\; t \geq t_0-(R/2)^4$. We now set the space-time cutoff by writing
\eqnb\label{eq:def_eta}
\eta_{z_0,  R}(x,t) \coloneqq \chi_{t_0,  R}(t) \; \varphi_{x_0,  R}(x).
\eqne
Finally, given a function $f$, $R>0$ and $x_0\in \RR$ we will write 
\eqnb\label{eq:hatf}
\hat f (x,t)\coloneqq  f (x,t) - \langle f(t)\rangle_{x_0, R}.
\eqne

\begin{proposition}[Caccioppoli inequality]
Let $v$ be a suitable weak solution to \eqref{SGMe} on $Q_R (z_0) = ({t_0 - R^4}, {t_0}] \times (x_0-R, x_0+R)$, then 
\begin{equation}\label{eq:cacc}
\sup_{t \in (t_0 - R^4, t_0)}  \int_{x_0-R}^{x_0+R} |\hat v|^2 \eta_{z_0,  R} \d x+ \int_{Q_R (z_0)} |\hat v_{xx}|^2 \eta_{z_0,  R} \le \frac{C}{R^2}  \int_{Q_R (z_0)} |\hat v_{x}|^2 + \frac{C}{R}  \int_{Q_R (z_0)} |\hat v_{x}|^3.
\end{equation}
\end{proposition}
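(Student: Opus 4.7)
The plan is to derive a local energy inequality (LEI) for $\hat v$ in place of $v$ and then estimate its right-hand side using elementary tools: properties of the cutoff $\eta_{z_0,R}$, Poincar\'e and $L^\infty$-Poincar\'e, and H\"older. The main difficulty will be the cubic term $\int\int \hat v_x^2\, \hat v\, \eta_{xx}$, which is the only one whose scaling forces recourse to $L^\infty$ control of $\hat v$.

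For the LEI for $\hat v$, I apply (viii) with test function $\phi = \eta \equiv \eta_{z_0,R}$ and substitute $v = \hat v + c(t)$, where $c(t) \coloneqq \langle v(t)\rangle_{x_0,R}$, expanding $v^2 = \hat v^2 + 2c\hat v + c^2$ and $v_x^2\, v = \hat v_x^2\, \hat v + c\, \hat v_x^2$. The clean target is
\[
\tfrac12 \int \hat v^2(t)\eta(t) + \int\int \hat v_{xx}^2\,\eta \le \int\int \Bigl[\tfrac12(\eta_t - \eta_{xxxx})\hat v^2 + 2\hat v_x^2\,\eta_{xx} - \tfrac53 \hat v_x^3\,\eta_x - \hat v_x^2\,\hat v\,\eta_{xx}\Bigr]
\]
(spatial integrals over $I_R = (x_0-R, x_0+R)$, time from $t_0-R^4$ to $t$). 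The substitution produces extras: $c^2\chi M/2$ on the LHS (where $M = \int\varphi$, $\chi = \chi_{t_0,R}$), and $\int\int c(\eta_t-\eta_{xxxx})\hat v$, $\tfrac12\int\int c^2 (\eta_t-\eta_{xxxx})$, $-\int\int c\,\hat v_x^2\,\eta_{xx}$ on the RHS. Using $\int\hat v\,\varphi = 0$, $\int\varphi_{xxxx} = 0$, and the evolution relation $Mc'(s) = -\int \hat v\,\varphi_{xxxx} - \int \hat v_x^2\,\varphi_{xx}$ (which follows as in Remark~\ref{rem:ct} from (vii) with $\phi = \psi(t)\varphi(x)$, and is legitimate because $c' \in L^{5/3}$), these extras cancel exactly after one integration by parts in $t$ (the boundary terms vanish because $\chi(t_0-R^4) = 0$).

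To bound the RHS, I use $|\partial_x^k \eta| \le C/R^k$ and $|\partial_t \eta| \le C/R^4$. The second and third integrands immediately yield $(C/R^2)\int_{Q_R}\hat v_x^2$ and $(C/R)\int_{Q_R}|\hat v_x|^3$. The first gives $(C/R^4)\int_{Q_R}\hat v^2$, which reduces to $(C/R^2)\int_{Q_R}\hat v_x^2$ by the zero-weighted-mean Poincar\'e $\|\hat v\|_{L^2(I_R)} \le CR\|\hat v_x\|_{L^2(I_R)}$ (which follows from standard Poincar\'e and the fact that $\varphi \equiv 1$ on $(-R/2, R/2)$ keeps the weighted mean close to the unweighted one). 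The fourth (cubic) term is the obstacle: since $\hat v$ has zero weighted mean it must change sign on $I_R$, so by the fundamental theorem of calculus and Cauchy--Schwarz, $\|\hat v\|_{L^\infty(I_R)}\le \sqrt{2R}\,\|\hat v_x\|_{L^2(I_R)}$. Then
\[
\int_{I_R} \hat v_x^2 |\hat v| \le \|\hat v\|_{L^\infty(I_R)}\|\hat v_x\|_{L^2(I_R)}^2 \le CR^{1/2}\|\hat v_x\|_{L^2(I_R)}^3 \le CR\,\|\hat v_x\|_{L^3(I_R)}^3,
\]
using $\|\hat v_x\|_{L^2(I_R)} \le (2R)^{1/6}\|\hat v_x\|_{L^3(I_R)}$. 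Combined with the $1/R^2$ factor from $\eta_{xx}$ and integration in time, this yields $(C/R)\int_{Q_R}|\hat v_x|^3$, closing the estimate. Taking $\sup$ over $t \in (t_0-R^4, t_0)$ on the left-hand side (after replacing each RHS term by its absolute value and extending the time integration to all of $(t_0-R^4, t_0)$) gives \eqref{eq:cacc}.
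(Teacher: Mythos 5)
Your proposal is correct and follows the paper's argument in essence: substitute $v=\hat v+c(t)$ in the local energy inequality with $\phi=\eta$, show the $c$-dependent terms cancel using the equation for $v$ (you derive an ODE for $c$ and integrate by parts in time; the paper equivalently tests the generalized distributional formulation \eqref{weak_form_cylinder_with_cutoff} with $\phi=c\eta$, both hinging on $c'\in L^{5/3}$), and then estimate the remaining right-hand side. The only cosmetic difference is in absorbing the cubic term $\int\hat v_x^2|\hat v|\,|\eta_{xx}|$: you use the one-dimensional $L^\infty$ bound $\|\hat v\|_{L^\infty}\le\sqrt{2R}\,\|\hat v_x\|_{L^2}$ directly, whereas the paper first applies Young's inequality $|\hat v_x|^2|\hat v|\le|\hat v_x|^3+|\hat v|^3$ and then a Poincar\'e--Sobolev inequality; both routes rely on $\hat v$ vanishing somewhere in $I_R$ and yield the same $\frac{C}{R}\int|\hat v_x|^3$ term.
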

The main issue in the proof consists in replacing  $v$ with the time dependent oscillation $\hat v$ in the local energy inequality. 
\begin{proof}
Without loss of generality we can assume $z_0=0$ and $R=1$. The case of general $z_0$ and $R>0$ follows then from dilations and shifts, due to scale invariance of \eqref{eq:cacc}. We will write $\eta \coloneqq \eta_{0,1}$ for brevity. For brevity we will skip the variable under the integrals below; instead every integral ``$\int_{-1}^1$'' is taken with respect to $x$ and we will write argument ``$(t)$'' to point out that the integral is taken at a given time $t$. Letting $c(t) \coloneqq \langle v(t)\rangle_{0,1}$ we have $v = \hat{v} + c(t)$, and so the local energy inequality \eqref{LEI_alt_form} with $\phi \coloneqq \eta$ gives 
\[\begin{split}
&\frac{1}{2} \int_{-1}^1 (\hat{v} +c)^2(t) \eta (t) + \int_{-1}^t \int_{-1}^1 |\hat{v}_{xx}|^2\eta \\
&\leq \int_{-1}^t \int_{-1}^1 \left(\frac{1}{2}(\eta_t-\eta_{xxxx})  (\hat{v} +c)^2 +2 |\hat{v}_x|^2\eta_{xx}-\frac{5}{3}   \hat{v}_x^3\eta_x-  |\hat{v}_x|^2 (\hat{v} +c) \eta_{xx} \right)
\end{split}
\]
for every $t\in (-1,0)$ (recall Remark \ref{rem:str_loc}). Rearranging this inequality so that all terms involving $c$ are moved to the right-hand side yields
\begin{equation}\label{eq:pcac1}
\begin{aligned}
&\frac{1}{2} \int_{-1}^1 \hat{v}^2(t) \eta(t)  + \int_{-1}^t \int_{-1}^1 |\hat{v}_{xx}|^2\eta \\
&\hspace{3cm}- \int_{-1}^t \int_{-1}^1 \left(\frac{1}{2}(\eta_t-\eta_{xxxx})  \hat{v}^2 +2 |\hat{v}_x|^2\eta_{xx}-\frac{5}{3}  \hat{v}_x^3\eta_x-  |\hat{v}_x|^2 \hat{v} \eta_{xx} \right) \\
&\leq -c (t)\int_{-1}^1  \hat{v} (t) \eta(t)  - \frac{c^2 (t)}{2} \int_{-1}^1  \eta(t)  + \int_{-1}^t \int_{-1}^1 \frac{1}{2}(\eta_t-\eta_{xxxx})  (2 c \hat{v} +c^2) - \int_{-1}^t \int_{-1}^1 c  |\hat{v}_x|^2 \eta_{xx}  \\
& = -c (t)\int_{-1}^1  {v} (t) \eta(t) + \frac{c^2 (t)}{2} \int_{-1}^1  \eta(t) + \int_{-1}^t \int_{-1}^1 \frac{1}{2}\eta_t  (2 c {v} -c^2) - \int_{-1}^t \int_{-1}^1 (c\eta)_{xxxx}  {v} \\
&\hspace{10.5cm}- \int_{-1}^t \int_{-1}^1 c  |{v}_x|^2 \eta_{xx},
\end{aligned}
\end{equation}
where we substituted $v = \hat{v} + c(t)$ and used the fact that $ \int_{-1}^1 \eta_{xxxx} =0$ for the last line. 

In order to deal with the second and the third term of the last line of \eqref{eq:pcac1}, let us observe that for every $t$ 
\begin{equation}\label{eq:adc1}
c (t) \int_{-1}^1 \eta (t)  =  \frac{\int_{-1}^1 v(t)\varphi_{0}}{\int_{-1}^1 \varphi_{0}} \chi_0 (t) \int_{-1}^1 \varphi_{0}  = \int_{-1}^1 \eta (t) v (t),
\end{equation}
by the definition of $c(t)$ and the fact that $\eta (x,t) =  \chi_0 (t) \varphi_{0} (x)$. Therefore, first integrating by parts and then using \eqref{eq:adc1}
\[
\frac{c^2 (t)}{2} \int_{-1}^1  \eta(t) + \int_{-1}^t \int_{-1}^1 \frac{1}{2}\eta_t (2 c {v} -c^2) = 
 \int_{-1}^t \int_{-1}^1 \eta_t c {v} + \int_{-1}^t  c'\,c \int_{-1}^1 \eta =  \int_{-1}^t \int_{-1}^1 (c\eta)_t v.
\]
Using the above identity in the last line of \eqref{eq:pcac1} we obtain
\begin{equation}\label{eq:pcac12}
\begin{aligned}
&\frac{1}{2} \int_{-1}^1 \hat{v}^2(t) \eta(t) + \int_{-1}^t \int_{-1}^1 |\hat{v}_{xx}|^2\eta  \\
&\hspace{2.5cm} - \int_{-1}^t \int_{-1}^1 \left(\frac{1}{2}(\eta_t-\eta_{xxxx})  \hat{v}^2 +2 |\hat{v}_x|^2\eta_{xx}-\frac{5}{3}  \hat{v}_x^3\eta_x-  |\hat{v}_x|^2 \hat{v} \eta_{xx} \right) \\
&\le -c (t)\int_{-1}^1  {v} (t) \eta(t)  + \int_{-1}^t \int_{-1}^1 ((c\eta)_t-(c\eta)_{xxxx})  {v}  - \int_{-1}^t \int_{-1}^1 c | {v}_x|^2 \eta_{xx} =:I.
\end{aligned}
\end{equation}

We will show that $I$ vanishes. To this end let us observe that we can use $\phi(x,t) \coloneqq c(t) \eta (x,t)$ as a test function in \eqref{weak_form_cylinder_with_cutoff}. Indeed, via Remark \ref{rem:ct} $\p_t (c\eta ) \in L_{5/3,\infty } (Q)$ and $c(t)$ is continuous. This gives $I=0$, and so \eqref{eq:pcac12} reduces to 
\[\begin{split}
 \frac{1}{2} \int_{-1}^1 \hat v^2(t) \eta(t) + &\int_{-1}^t \int_{-1}^1 |\hat v_{xx}|^2\eta \\
 &\leq \int_{-1}^t \int_{-1}^1 \left(\frac{1}{2}(\eta_t-\eta_{xxxx})  \hat  v^2
+2 |\hat  v_x|^2\eta_{xx}-\frac{5}{3}  \hat  v_x^3\eta_x- | \hat  v_x|^2 \hat  v \eta_{xx} \right)
\end{split}
\]
  for every $t \in (-1,0)$. Thus

\[
 \begin{aligned}
\sup_{t \in (- 1, 0)}  \int_{-1}^{1}   \hat v(t)^2 \eta (t) + \int_{Q} | \hat v_{xx}|^2 \eta &\leq C 
\int_Q \left( \hat v^2 + |\hat v_{x}|^2 +   | \hat v_{x}|^3 +| \hat v_x|^2 | \hat v |  \right) \ \\
&\le C \int_{Q}  \left( \hat v^2 + |\hat v |^3+| \hat v_{x}|^2 +   | \hat v_{x}|^3 \right) \\
&\leq C \int_Q  \left( |\hat v_{x}|^2 +   | \hat v_{x}|^3 \right),
\end{aligned}
\]
as required, where we used the spatial Poincar\'e-Sobolev inequality (for functions with vanishing $\varphi$-related means) in the last line. 
\end{proof}
\section{Higher integrability}
Here we prove part (i) of Theorem \ref{thm_main}. First we derive a reverse H\"older inequality from the Caccioppoli inequality \eqref{eq:cacc}. Indeed, \eqref{eq:cacc} gives 
\begin{equation}\label{eq:cacchi}
\begin{aligned}
\sup_{t \in (t_0 - R^4, t_0)}  \int_{x_0-R}^{x_0+R} |\hat v \eta_{z_0,  R}|^2 \d x + \int_{Q_R (z_0)} |\p_{xx} (\hat v \eta_{z_0, R} )|^2 &\le \frac{C}{R^2}  \int_{Q_R (z_0)} |\hat v_{x}|^2 + \frac{C}{R}  \int_{Q_R (z_0)} |\hat v_{x}|^3 \\
& \hspace{-1cm} \le \frac{C}{R^\frac{1}{3}}  \left(\int_{Q_R (z_0)} |\hat v_{x}|^3\right)^\frac{2}{3} + \frac{C}{R}  \int_{Q_R (z_0)} |\hat v_{x}|^3
\end{aligned}
\end{equation}
Recall that by assumption \eqref{eq:bass}
\[
\sup_{Q_R (z_0) \subset Q} \frac{1}{R^2}  \int_{Q_R (z_0)} |v_x|^3 \le M.
\]
Thus \eqref{eq:cacchi} yields
\begin{equation}\label{eq:cacchi2}
\sup_{t \in (t_0 - R^4, t_0)}  \int_{x_0-R}^{x_0+R} |\hat v \eta_{z_0,  R}|^2 \d x+ \int_{Q_R (z_0)} |\p_{xx} (\hat v \eta_{z_0,  R} )|^2 \le  C \frac{1+ M^\frac{1}{3}}{R^\frac{1}{3}}  \left(\int_{Q_R (z_0)} |\hat v_{x}|^3\right)^\frac{2}{3} 
\end{equation}
for every $Q_R (z_0)\subset Q$. Let us use multiplicative inequality \eqref{eq:mhi} for $\hat v \,\eta_{z_0,R}$ to compute 
\[
\begin{split}
\left( \int_{Q_{R/2}(z_0)} | \hat v_x |^{10/3} \right)^{3/5}& \leq C \| \hat v \eta_{z_0,R} \|_{L_{2,\infty } (Q_{R}(z_0))}^{4/5}\| \p_{xx} (\hat v \eta_{z_0,R} ) \|_{L_{2,2 } (Q_{R}(z_0))}^{6/5} \\
&\leq  \sup_{t \in (t_0 - R^4, t_0)}  \int_{x_0-R}^{x_0+R} |\hat v \eta_{z_0,R} |^2\d x + \int_{Q_{R} (z_0)} |\p_{xx} ( \hat v \eta_{z_0,R} )  |^2 \\
&\leq (1+M^{1/3} ) \frac{1}{R^\frac{1}{3}}  \left(\int_{Q_R (z_0)} |\hat v_{x}|^3\right)^\frac{2}{3} ,
\end{split}
\]
where we used \eqref{eq:cacchi2} in the last line. In other words we obtain the reverse H\"older inequality
\[
 \left(  \fint_{Q_\frac{R}{2} (z_0)} |\hat v_{x}|^\frac{10}{3}\right)^\frac{3}{10} \le C_M  \left(  \fint_{Q_R (z_0)} |\hat v_{x}|^3\right)^\frac{1}{3}
\]
for every $Q_R(z_0)\subset Q$. Applying the Gehring Lemma (see Proposition 1.3 in \cite{GiaquintaStruwe82}, for example) gives part (i) of Theorem \ref{thm_main}.

\section{$\varepsilon$-regularity}
In this section we prove the part (ii) of Theorem \ref{thm_main}. It relies on quantifying decay of 
\[
Y(r,v)\coloneqq \left( \fint_{Q_r} \left| v_x - (v_x)_{r} \right|^3 \right)^{1/3}
\]
as $r\to 0^+$, where $(f)_{r} \coloneqq \fint_{Q_r} f$. The needed decay lemma is stated and proved in subsection \ref{ssec:dec}. Then we conclude our proof of the part (ii) of Theorem \ref{thm_main} in subsection \ref{ssec:prc}, using the Campanato characterisation of H\"older continuity.

\subsection{Decay estimate}\label{ssec:dec}

\begin{lemma}[Decay Lemma]\label{lem_iteration} For any $M>1$ and $\theta \in (0,1/4)$ there exists $\varepsilon_0 (\theta, M) >0$ such that for any suitable weak solution $v$ to the SGM \eqref{SGMe} on $Q_1$, if 
\begin{equation}\label{ass:dl}
\left| (v_x)_1 \right| \leq M \quad \text{ and }\quad  Y(1,v)\leq \varepsilon_0,
\end{equation}
then
\[
Y(\theta ,v) \leq 6cM \theta \,Y(1,v),
\]
where $c$ is the universal constant from Lemma \ref{lem_campanato_linear}.
\end{lemma}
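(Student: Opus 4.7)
I would use a blowup and compactness argument, with Lemma \ref{lem_campanato_linear} supplying the decay on the limiting linear equation.

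\textbf{Blowup.} Assume the statement fails: there are suitable weak solutions $v^n$ on $Q_1$ with $\beta_n := (v^n_x)_1 \in [-M,M]$, $\lambda_n := Y(1,v^n) \to 0$, yet $Y(\theta,v^n) > 6cM\theta\,\lambda_n$. Introduce the rescaled derivative
\[
w^n \coloneqq (v^n_x - \beta_n)/\lambda_n,
\]
so that $(w^n)_1 = 0$, $\int_{Q_1}|w^n|^3 = |Q_1| = 2$, and the failure assumption becomes
\[
\Bigl(\fint_{Q_\theta}|w^n - (w^n)_\theta|^3\Bigr)^{1/3} > 6cM\theta.
\]
Substituting $v^n_x = \lambda_n w^n + \beta_n$ in \eqref{SGMe} and differentiating once in $x$ yields, in the sense of distributions on $Q_1$,
\[
w^n_t + w^n_{xxxx} + 2\beta_n w^n_{xxx} = -\lambda_n \partial_{xxx}\bigl((w^n)^2\bigr),
\]
which is \eqref{eq:linb} (with parameter $2\beta_n$) perturbed by a nonlinearity vanishing as $\lambda_n \to 0$.

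\textbf{Compactness.} The normalization yields $w^n$ uniformly bounded in $L_{3,3}(Q_1)$, and $\|w^n\|_{L_{2,2}(Q_1)} \le \sqrt{2}$ by H\"older. Adapting the Caccioppoli argument of \eqref{eq:cacc} to the drift-perturbed equation for $w^n$ (testing with $w^n\eta$ and absorbing the $\beta_n w^n_{xxx}$ term via Young, with constants depending on $M$) gives, uniformly in $n$, interior bounds of $w^n_x$ in $L_{2,2}$ and $w^n$ in $L_{2,\infty}$; Proposition \ref{prop_mhi} then raises the integrability of $w^n$ to $L_{10/3,10/3}$ on interior cylinders. Combined with a uniform bound on $w^n_t$ in a negative-order space (read off the equation), Aubin--Lions compactness extracts a subsequence with $\beta_n \to \beta \in [-M,M]$, weak convergence $w^n \rightharpoonup w$ in $L_{3,3}(Q_1)$, and strong convergence $w^n \to w$ in $L_{3,3}(Q_\theta)$. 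Passing to the distributional limit on $Q_1$, the nonlinearity vanishes (since $(w^n)^2$ stays bounded in $L^{3/2}$), and $w$ satisfies $w_t + w_{xxxx} + 2\beta w_{xxx} = 0$ on $Q_1$. Setting $u(x,t) := \int_{-1}^x w(y,t)\,\d y$ gives a function in $L_{2,2}(Q_1)$ with $u_x = w$, satisfying \eqref{eq:linb} with parameter $2\beta$ up to a purely temporal term (irrelevant for $u_x$).

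\textbf{Closing.} Lower semicontinuity of the $L^2$-norm yields $\|u_x\|_{L_{2,2}(Q_1)} \le \sqrt{2}$. Lemma \ref{lem_campanato_linear} with $p=3$ on $Q_1$ then gives
\[
\Bigl(\fint_{Q_\theta}|u_x - (u_x)_\theta|^3\Bigr)^{1/3} \le c(1 + 2M)\theta\,\sqrt{2}.
\]
The elementary inequality $\sqrt{2}(1+2M) < 6M$ holds for $M > \sqrt{2}/(6-2\sqrt{2}) \approx 0.45$, a fortiori for $M > 1$; hence the last expression is strictly less than $6cM\theta$. On the other hand, strong $L_{3,3}(Q_\theta)$ convergence yields
\[
\Bigl(\fint_{Q_\theta}|w^n - (w^n)_\theta|^3\Bigr)^{1/3} \to \Bigl(\fint_{Q_\theta}|u_x - (u_x)_\theta|^3\Bigr)^{1/3} < 6cM\theta,
\]
contradicting the failure assumption.

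\textbf{Main obstacle.} The technical heart of the proof is securing strong $L_{3,3}$ convergence of $w^n$ on interior cylinders: this hinges on deriving a Caccioppoli-type energy estimate for the drift-perturbed equation satisfied by $w^n$, essentially repeating the argument of \eqref{eq:cacc} with an extra first-order drift absorbed via Young's inequality. Working with $w^n = u^n_x$ (rather than $u^n$ itself) sidesteps the divergent temporal constant $d_n'/\lambda_n$ that would otherwise contaminate the equation. A final numerical subtlety is verifying $\sqrt{2}(1+2M) < 6M$ for $M>1$, which is why the explicit constant $6c$ in the target inequality is calibrated the way it is.
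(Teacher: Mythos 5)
Your overall blueprint—contradiction, rescale, pass to a linearised limit, apply Lemma~\ref{lem_campanato_linear}, close with the numerics $\sqrt{2}(1+2M)<6M$ for $M>1$—matches the paper's, and the numerical calibration of the constant $6c$ is exactly right. The key difference is that you run the blowup at the level of the differentiated quantity $w^n=(v^n_x-\beta_n)/\lambda_n$ and propose to extract uniform energy bounds by ``testing with $w^n\eta$''. That step has a genuine gap.

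A suitable weak solution only comes equipped with a local energy inequality for $v$ itself, see \eqref{LEI_alt_form}; nothing in the definition gives you the right to multiply the \emph{differentiated} equation by $w^n\eta$ and integrate by parts, since that would require regularity of $v^n_x$ beyond $v^n_{xx}\in L_{2,2}$, which is precisely what is at stake. In other words, there is no a priori local energy inequality for $w^n$, so the Caccioppoli estimate you invoke (``absorbing the drift via Young'') has no starting point. The paper's workaround is essential and not merely cosmetic: it rescales at the level of the \emph{function} $u^{(k)}=\varepsilon_k^{-1}(v^{(k)}-[v^{(k)}]_1-\beta_k x)$, so that the local energy inequality for $v^{(k)}$ transfers directly by linearity to \eqref{eq_for_uk_lei}, and the resulting Caccioppoli inequality \eqref{eq:caccp} controls $\partial_{xx}(\hat u^{(k)}\eta)=\partial_x(w^{(k)})\eta+\cdots$ in $L_{2,2}$. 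Your desired uniform interior $L_{2,2}$ bound on $w^n_x$ is thus exactly what the paper obtains, but only by working one derivative ``lower''. If you rewrite your compactness step to derive the Caccioppoli inequality for $\hat u^n$ (not $w^n$) and then read off the $w^n_x$ bound as $u^n_{xx}$, the rest of your argument goes through.

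A secondary (repairable) issue is the reconstruction $u(x,t)\coloneqq\int_{-1}^x w(y,t)\,\d y$ after the limit: as you note, this $u$ satisfies \eqref{eq:linb} only up to a purely temporal distribution $g(t)$, and to conclude from Lemma~\ref{lem_campanato_linear} you need $g$ to be at least $L^1_{\mathrm{loc}}$ so that the correction $u\mapsto u-G$ with $G'=g$ is legitimate; this requires a separate argument. The paper avoids the issue entirely by passing to the limit in $u^{(k)}$ directly, which it can do because it already has uniform bounds on $u^{(k)}$ itself (including the bound $\sup_t|\langle u^{(k)}(t)\rangle_{0,1}|\le C_M$ handled in Section~\ref{sec_step1}). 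Your observation that working with $w^n$ ``sidesteps the divergent temporal constant'' is the right instinct, but the price is that it also sidesteps the one energy inequality you actually have; the paper pays the temporal-constant bill explicitly and gets the energy inequality in return.
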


In the following Sections \ref{sec_step0}-\ref{sec_step3} we prove Lemma \ref{lem_iteration}: we first compare nonlinear problem \eqref{SGMe} to a linear one \eqref{eq:linb} using the blow-up technique, and then we use quantitative decay for a linear system, by means of Lemma \ref{lem_campanato_linear}. 

\subsubsection{Proof of Lemma \ref{lem_iteration}. The Setup}\label{sec_step0}

Suppose that Lemma \ref{lem_iteration} is false. Then there exist numbers $\theta \in (0,1/4)$, $M>1$, a sequence $\varepsilon_k \to 0$ and a sequence of suitable weak solutions $v^{(k)}$ such that 
 \eqnb\label{ass:dla}
\left| ( v^{(k)}_x )_1 \right| \leq M,\quad  Y(1,v^{(k)})=\varepsilon_k, \quad \text{ and } \quad Y(\theta , v^{(k)}) > 6cM \theta \varepsilon_k 
 \eqne
 We let
\[
 u^{(k)} \coloneqq \varepsilon_k^{-1} \left( v^{(k)} - [v^{(k)}]_{1} - (\p_x v^{(k)} )_1\,x \right),
 \]
where we set  $[f]_{1} \coloneqq \fint_{-1}^{0} \langle f(t)\rangle_{0,1}  \d t$, with $\langle f(t)\rangle_{0,1} $ denoting the $\varphi$-related mean (recall \eqref{eq:mmv}).
Since $\varphi$ is even we have $[u^{(k)}]_{1}=( u^{(k)}_x )_1=0$; $u^{(k)}$ also normalises \eqref{ass:dla}, i.e.\
 \eqnb\label{prop_of_uk}
 Y(1,u^{(k)} ) = 1 \quad \text{ and }\quad Y(\theta , u^{(k)} )> 6cM \theta.
 \eqne 
 
 \subsubsection{Proof of Lemma \ref{lem_iteration}. Uniform estimate on the rescalings $u^{(k)}$}\label{sec_step1}
 
Here we show that
\eqnb\label{EI_u_k}
\|u^{(k)} \eta_{0, 1} \|_{L_{2, \infty} (Q_{1})} + \| \p_{xx}(u^{(k)}\eta_{0,  1} ) \|_{L_{2,2}(Q_{1})} \leq C_M.
\eqne
uniformly in $k$. (Recall \eqref{eq:def_eta} for the definition of $\eta_{0,  1}$.)

Letting $\beta_k\coloneqq (v^{(k)}_x)_1$, we see that $u^{(k)}$ satisfies the following {\em{perturbed SGM}}
 \eqnb\label{eq_for_uk}
 \p_t u^{(k)} +  u^{(k)}_{xxxx} + \varepsilon_k \p_{xx} (u^{(k)}_x)^2 + 2 \beta_k u^{(k)}_{xxx} =0
 \eqne
in $Q_1$ in the sense of distributions. Moreover, the local energy inequality \eqref{LEI_alt_form} for $v^{(k)}$ gives that 
\eqnb\label{eq_for_uk_lei}\begin{split}
&\frac{1}{2} \int_{-1}^1 (u^{(k)})^2(t) \phi(t) + \int_{-1}^t \int_{-1}^1 | u^{(k)}_{xx}|^2\phi \\
& \leq\varepsilon_k \int_{-1}^t \int_{-1}^1 \left(\frac{1}{2}(\phi_t-\phi_{xxxx})(u^{(k)})^2 +2( u^{(k)}_x)^2\phi_{xx}-\frac{5}{3}(u_x^{(k)})^3\phi_x-(u_x^{(k)})^2u^{(k)}\phi_{xx} \right) \\
&- \beta_k \int_{-1}^t \int_{-1}^1 3 (u_x^{(k)})^2\phi_{x}  - (u^{(k)})^2\phi_{xxx} 
\end{split}
\eqne
 for any nonnegative $ \phi \in C_0^\infty (Q_1)$ and every $t \in (-1, 0)$. Here, as before, we skipped the variable under the integrals below, and every integral ``$\int_{-1}^1$'' is taken with respect to $x$ and we wrote argument ``$(t)$'' to point out that the integral is taken at a given time $t$. Unless specified otherwise, we will apply the same convention in what follows. Letting 
 \[\hat u^{(k)} \coloneqq u^{(k)} - \langle u^{(k)}(t)\rangle_{0, 1} 
 \]
 we can use the above inequality  to obtain the following Caccioppoli inequality for $\hat u^{(k)}$, analogously to how \eqref{eq:cacc} was proven
 \begin{equation}\label{eq:caccp}
\begin{aligned}
\sup_{t \in (- 1, 0)}  \int_{-1}^{1} \hat u^{(k)}(t) ^2 \eta_{0,  1} (t) +& \int_{Q_1} |  \hat u^{(k)}_{xx}|^2 \eta_{0, 1} \\ \le& {\varepsilon_k C}  \int_{Q_1} |  \hat u_{x}^{(k)}|^2 + {\varepsilon_k C}  \int_{Q_1} | \hat u_{x}^{(k)}|^3
+{C |\beta_k| }  \int_{Q_1 } | \hat u_{x}^{(k)}|^2
\end{aligned}
\end{equation}
for each $k$. Recalling that $\varepsilon_k \leq 1$, $| \beta_k | = \left| ( v^{(k)}_x )_1 \right| \leq M$ (see \eqref{ass:dla}) and observing that \eqref{prop_of_uk} gives
\[ \int_{Q_1} |  \hat u_x^{(k)} |^3  =  \int_{Q_1} | u_x^{(k)} |^3  = 2Y(1,u^{(k)} )^3 =2
\]
we obtain
\[
\|\hat u^{(k)} \eta_{0,  1} \|_{L_{2, \infty} (Q_{1})} + \| \p_{xx}( \hat u^{(k)} \eta_{0,  1}) \|_{L_{2,2}(Q_{1})} \leq C_M.
\]
Comparing this with \eqref{EI_u_k}, we see that it suffices to show that $\sup_{t\in (-1,0)} \langle u^{(k)} (t) \rangle_{0,1}  \leq C_M$. We will write 
\[
c(t) \coloneqq \langle u^{(k)}(t) \rangle_{0,1} ,
\]
for brevity. Similarly as in \eqref{eq:mmv1}  we have  for every $\psi \in C_0^\infty (-1,0)$
\[\begin{split}
\left| \int_{-1}^0 c (t) \psi' (t) \, \d t \right|  &\leq  C  \|  u^{(k)}_x \|_{L_{3,3} (Q_1)}^2 \| \psi \|_{L_{3} (-1,0)} + C \|  u_x^{(k)} \|_{L_{3,3} (Q_1)} \| \psi \|_{L_{3/2} (-1,0)} \\
&\leq C \| \psi \|_{L_3 (-1,0)},
\end{split}
\]
 where we used \eqref{prop_of_uk}. Thus $c(t)$ is continuous with $c'(t) \in L_{3/2} (-1,0)$. Since $\int_{-1}^0 c(t)\, \d t =[u^{(k)}]_{1}=0$ there exists $t_0\in (-1,0)$ such that $c(t_0)=0$, and hence
 \[
 |c(t)| = |c(t) - c(t_0)| = \left| \int_{t_0}^t c'(s) \, \d s \right| \leq  \| c' \|_{L_{3/2}(-1,0)} \leq C
 \] 
 for every $t\in (-1,0)$, as required. We thus obtained \eqref{EI_u_k}. 
 
 \subsubsection{Proof of Lemma \ref{lem_iteration}. Blowup limit equation}\label{sec_blowup_limit_eq}
 Here we extract a sequence of $u^{(k)}$ converging to a limit $u$ that satisfies a linear equation and that $Y(\theta , u ) < 6cM \theta$.
 
 Indeed, from \eqref{prop_of_uk}, the interpolation inequality $\| f \|_{L_{10,10} (U )} \leq C \| f \|^{\frac45 }_{L_{2,\infty } (U )} \| f \|^{\frac15 }_{W_{2,2 }^{2,0} (U )} $  applied to $u^{(k)}\eta_{0,1}$ and \eqref{EI_u_k} we obtain 
 \eqnb\label{L3_bounds_uk}
 \int_{Q_{1}} | u_x^{(k)}|^3 = 2 ,\qquad \int_{Q_{1/2}} (u^{(k)})^{10} \leq C_M.
 \eqne
This and the fact that $| \beta_k | \leq M$ allow to extract a subsequence (which we relabel) such that
\[
 u^{(k)} \rightharpoonup u\quad  \text{ in } W^{1,0}_{3,3} (Q_{1/2}) \qquad \text{ and }\qquad  \beta_k  \to \beta
\]
for some $u\in W^{1,0}_{3,3} (Q_{1/2})$ and $\beta \in [-M,M]$. Since also $\varepsilon_k \to 0$, we can take $k\to \infty $ in the distributional formulation of {perturbed SGM} \eqref{eq_for_uk} to obtain that $u$ is a distributional solution to the linear equation
 \[
 u_t + u_{xxxx} + 2\beta u_{xxx} =0\quad \text{ in }Q_{1/2}.
 \]
 Applying Lemma \ref{lem_campanato_linear} and using the fact that $\int_{Q_{\theta }} |u_x|^3 \leq 2 $ for every $\theta <1/2$ (a consequence of the weak convergence and \eqref{L3_bounds_uk}) we obtain 
 \begin{equation}\label{eq:lwsc}
 Y(\theta , u) = \left( \fint_{Q_\theta} \left| u_x - (u_x)_{\theta} \right|^3 \right)^{1/3} \leq c(1+ 2|\beta |) \theta \| u_x \|_{L_3(Q_{1})}  < 6cM  \theta,
 \end{equation}
 as required.

 \subsubsection{Proof of Lemma \ref{lem_iteration}. Compactness and contradiciton}\label{sec_step3}
Here we will show that (on a subsequence)
\begin{equation}\label{eq:strongc}
 u_x^{(k)} \to  u_x \qquad \text{ in } \quad L_3 (Q_{1/2}) .
 \end{equation}
 This will conclude the proof of Lemma \ref{lem_iteration} since the strong limit \eqref{eq:strongc} implies $Y(\theta , u^{(k)} ) \to Y(\theta , u)$ and so the inequality in \eqref{prop_of_uk} yields
\begin{equation}\label{eq:contra}
 6cM  \theta \le  Y(\theta , u) \stackrel{\eqref{eq:lwsc}}{<} 6cM  \theta,
\end{equation}
 a contradiction.
 
In order to justify \eqref{eq:strongc} we will use an Aubin-Lions argument. Applying \eqref{EI_u_k} and the interpolation inequality $\| f \|_{W^{7/6,0}_{3 , 3} }\leq C \| f \|^{\frac13 }_{L_{2,\infty } } \| f \|^{\frac23 }_{W_{2,2 }^{2,0} }$ to the function $u^{(k)} \eta_{0,1}$ we obtain that
\eqnb\label{u_k_for_AL}
\|  u^{(k)} \|_{W^{7/6,0}_{3 , 3} (Q_{1/2})}  \leq C_M.
\eqne
On the other hand for every $\phi \in C_0^\infty (Q_{1/2})$ 
\[\begin{split}
\left| \int_{Q_{1/2}}  u^{(k)}\, \p_t \phi \right| &= \left| -\int_{Q_{1/2}} ( u_{xx}^{(k)}) \left( \phi_{xx} - 2\beta_k \phi_x \right) - \varepsilon_k \int_{Q_{1/2}} \left( u_x^{(k)} \right)^2 \phi_{xx}\right|\\
&\leq C_M \| \phi \|_{W^{2, 0}_{3,3} (Q_{1/2})} \left( \| u_{xx}^{(k)} \|_{L_{3/2,3/2} (Q_{1/2})} +\|  u_x^{(k)} \|^2_{L_{3,3} (Q_{1/2})}  \right)\\
&\leq  C_M \| \phi \|_{W^{2, 0}_{3,3} (Q_{1/2})},
\end{split}
\]
where we used \eqref{EI_u_k} and \eqref{L3_bounds_uk} in the last line.
Hence
\eqnb\label{tu_k_for_AL}
\| u^{(k)}_t \|_{L_\frac{3}{2} ( -1/16,0; (W^2_3(B_{1/2}))^*)} \leq C_M.
\eqne 
This, \eqref{L3_bounds_uk} and \eqref{u_k_for_AL} allow us to use the Aubin--Lions lemma (see Section 3.2.2 in \cite{temam}, for example) to extract a subsequence of $u^{(k)}$ that converges strongly in ${W^{\frac{7}{6}-\delta, 0}_{3,3} (Q_{1/2})}$ for any fixed $\delta \in (0,7/6)$. (Recall that ${W}{^{\frac{7}{6}-\delta,0}_{3,3}}(Q_r) = L_3(t-r^4,t;{W}^{\frac{7}{6}-\delta}_3  (x-r,x+r) )$.) Taking $\delta \coloneqq 1/6$ gives \eqref{eq:strongc}, as required. 

\subsection{Concluding $\varepsilon$-regularity proof}\label{ssec:prc}
Here we finish the proof of part (ii) of Theorem \ref{thm_main} by iterating Lemma \ref{lem_iteration}. Indeed we have the following.
\begin{corollary}\label{cor_iterate_in_theta} Given $\gamma \in (0,1)$ and $M\geq 1$, there exist $\varepsilon_0, \theta \in (0,1/2)$ with the following property. If $v$ is a suitable weak solution to the SGM \eqref{SGMe} on $Q_1$ such that
\[
\left| (v_x )_1 \right| \leq M,\qquad Y(1,v)\leq \varepsilon_0
\]
then
\eqnb\label{claim_of_cor}
\theta^{k-1} \left| (v_x)_{\theta^{k-1}}\right|\leq M\quad \text{ and }\quad Y(\theta^k , v)\leq \theta^\gamma Y(\theta^{k-1} , v)
\eqne
for every $k\geq 1$.
\end{corollary}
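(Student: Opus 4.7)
The plan is to establish \eqref{claim_of_cor} by simultaneous induction on $k$: the decay part $(B_k)\colon Y(\theta^k,v) \le \theta^\gamma Y(\theta^{k-1},v)$ will be obtained by rescaling into $Q_1$ and applying Lemma \ref{lem_iteration}, while the bound $(A_k)\colon \theta^{k-1}|(v_x)_{\theta^{k-1}}| \le M$ on the running mean will be propagated by a telescoping estimate.

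To fix parameters, I would first choose $\theta \in (0,1/2)$ small enough that $6cM\theta \le \theta^\gamma$; this is possible because $\gamma<1$ (explicitly, $\theta \le (6cM)^{-1/(1-\gamma)}$). Let $\varepsilon_*(\theta,M)$ denote the threshold produced by Lemma \ref{lem_iteration} for this $\theta$ and $M$, and pick $\varepsilon_0 \in (0,\varepsilon_*]$ further small to close the estimate below. The base case $k=1$ is immediate: $(A_1)$ is the hypothesis, and $(B_1)$ follows by applying Lemma \ref{lem_iteration} to $v$ itself, since $|(v_x)_1|\le M$ and $Y(1,v)\le\varepsilon_0\le\varepsilon_*$.

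For the inductive step, assuming $(A_k)$ and $(B_1),\ldots,(B_{k-1})$, I would first prove $(B_k)$ via the rescaling $w(x,t) \coloneqq v(\theta^{k-1}x,\theta^{4(k-1)}t)$, which is a suitable weak solution on $Q_1$ by scale-invariance of \eqref{SGMe}. A change of variables yields $(w_x)_1 = \theta^{k-1}(v_x)_{\theta^{k-1}}$, which is at most $M$ by $(A_k)$, and $Y(1,w) = \theta^{k-1}Y(\theta^{k-1},v) \le \theta^{(k-1)(1+\gamma)}\varepsilon_0 \le \varepsilon_*$ by iterating the preceding $(B_j)$'s. Lemma \ref{lem_iteration} then gives $Y(\theta,w)\le 6cM\theta\, Y(1,w)$, which rescales to $Y(\theta^k,v)\le\theta^\gamma Y(\theta^{k-1},v)$. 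Next, $(A_{k+1})$ follows from telescoping:
\[
|(v_x)_{\theta^k}| \le |(v_x)_1| + \sum_{j=1}^{k}\bigl|(v_x)_{\theta^j}-(v_x)_{\theta^{j-1}}\bigr|,
\]
where each difference is bounded, using H\"older and the biparabolic volume ratio $|Q_{\theta^{j-1}}|/|Q_{\theta^j}|=\theta^{-5}$, by
\[
\bigl|(v_x)_{\theta^j}-(v_x)_{\theta^{j-1}}\bigr| \le \fint_{Q_{\theta^j}}\bigl|v_x-(v_x)_{\theta^{j-1}}\bigr| \le \theta^{-5/3}Y(\theta^{j-1},v) \le \theta^{-5/3}\theta^{(j-1)\gamma}\varepsilon_0.
\]
Summing yields $\theta^k|(v_x)_{\theta^k}| \le \theta^k M + \varepsilon_0\theta^{k-5/3}/(1-\theta^\gamma)$, which is at most $M$ once $\varepsilon_0$ is chosen small enough in terms of $\theta$, $\gamma$ and $M$ (the worst case being $k=1$, where $\theta^{k-5/3}=\theta^{-2/3}$).

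The main obstacle I would expect is the bookkeeping in the telescoping step: the naive estimate $|(v_x)_{\theta^j}-(v_x)_{\theta^{j-1}}| \le \theta^{-5}Y(\theta^{j-1},v)$ obtained by extending the $L^1$ average of $|v_x-(v_x)_{\theta^{j-1}}|$ to the larger cylinder produces a factor $\theta^{-5}$ per scale, which would be fatal. The essential trick is to apply H\"older \emph{before} enlarging the domain of integration, replacing $\theta^{-5}$ with $\theta^{-5/3}$; combined with the normalization factor $\theta^k$ and the geometric decay $\theta^{(j-1)\gamma}$ coming from the already-proven $(B_j)$'s, the resulting series is summable to a small constant times $\varepsilon_0$, and the induction closes.
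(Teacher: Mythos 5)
Your proof is correct and follows essentially the same route as the paper: fix $\theta$ so that $6cM\theta<\theta^\gamma$, induct on $k$, apply Lemma \ref{lem_iteration} to the rescaled solution, and control the drift of the means by a telescoping estimate. The only genuine variation is cosmetic: you telescope the means all the way back to scale $1$ and sum a geometric series, whereas the paper propagates the bound one scale at a time, using the inductive hypothesis $\theta^{k-1}|(v_x)_{\theta^{k-1}}|\le M$ directly and choosing $\varepsilon_0<\theta^5 M/2$; both routes close with a suitably small $\varepsilon_0(\theta,\gamma,M)$.

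One inaccuracy in your commentary: the naive volume-ratio bound $|(v_x)_{\theta^j}-(v_x)_{\theta^{j-1}}|\le\theta^{-5}Y(\theta^{j-1},v)$ is \emph{not} fatal, and applying H\"older before enlarging is not the essential trick you present it as. The factor $\theta^{-5}$ is a fixed constant per term, not a compounding one; the series you sum still converges geometrically thanks to the already-established decay $Y(\theta^{j-1},v)\le\theta^{(j-1)\gamma}\varepsilon_0$, and the price of the cruder bound is merely a somewhat smaller admissible $\varepsilon_0$ (indeed the paper uses the $\theta^{-5}$ bound and simply requires $\varepsilon_0<\theta^5M/2$). Your H\"older-before-enlarging step, giving $\theta^{-5/3}$, is a harmless sharpening of the constant, nothing more.
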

\begin{proof}
We fix $\theta \in (0,1/2)$ so small that
\begin{equation}\label{iter_c1}
6cM \theta^{1-\gamma} <1.
\end{equation}
(Recall that $c$ is the universal constant from Lemma \ref{lem_campanato_linear}.) Let $\varepsilon_0$ be sufficiently small so that Lemma \ref{lem_iteration} holds and
\begin{equation}\label{iter_c2}
\varepsilon_0 < \theta^5 M/2.
\end{equation}
The case $k=1$ follows from our assumptions and Lemma \ref{lem_iteration}. For $k>1$ we proceed by induction. Suppose that \eqref{claim_of_cor} holds for $k'\leq k$. Then
\eqnb\label{means_bound_k_step}
\begin{split}
\theta^k \left| (v_x)_{\theta^{k}}\right| &\leq \theta^k \left| (v_x)_{\theta^{k}} -(v_x)_{\theta^{k-1}} \right| + \theta^k  \left| (v_x)_{\theta^{k-1}}\right| = \theta^k \left| \fint_{Q_{\theta^k}} \left( v_x -(v_x)_{\theta^{k-1}} \right) \right| + \theta^k  \left| (v_x)_{\theta^{k-1}}\right|\\
\leq &\,\theta^{k-5}  \fint_{Q_{\theta^{k-1}}} \left| v_x -(v_x)_{\theta^{k-1}}  \right| + \theta \, M \leq \theta^{k-5} \left( \fint_{Q_{\theta^{k-1}}} \left| v_x -(v_x)_{\theta^{k-1}} \right|^3 \right)^{1/3} +  M/2\\
= &\, \theta^{k-5} Y(\theta^{k-1} , v) + M/2 \leq \theta^{-5} Y(1,v) + M/2 \leq  \theta^{-5} \varepsilon_0 + M/2 \leq  M,
\end{split}
\eqne
where we used Jensen's inequality, the fact that $\theta<\frac12$ (so that in particular $\theta^\gamma <1$ and $\theta^k<1$), the inductive assumption (for $k'=1,\ldots, k-1$), and the choice \eqref{iter_c2}.

It remains to show that $Y(\theta^{k+1}, v) \leq \theta^\gamma Y(\theta^{k},v)$. To this end let us rescale
\[ v^k (x,t) \coloneqq v(\theta^k x,\theta^{4k} t).\]
In particular, $v^k$ is a suitable weak solution of SGM \eqref{SGMe} on $Q_1$, and
 \[
 \left| ( v_x^k )_1 \right| = \theta^k   \left| (v_x)_{\theta^{k}}\right| \leq M \quad\text{ and }\quad
 Y(1,v^k) = \theta^k Y(\theta^k , v) \leq Y(1,v) \leq \varepsilon_0,
 \]
where we used \eqref{means_bound_k_step}, the assumption \eqref{claim_of_cor} (for $k'=1,\ldots , k$) and the fact that $\theta <1$. Thus Lemma \ref{lem_iteration} gives $Y(\theta , v^k ) \leq 6cM  \theta \,\,Y(1,v^k)$, from which we conclude
\[
Y(\theta^{k+1} , v) = \theta^{-k } Y(\theta , v^k ) \leq 6cM  \theta^{1-k} \,Y(1 ,v^k ) = 6cM  \theta\, Y(\theta^k , v) < \theta^\gamma \,Y(\theta^k , v),
\]
via the choice \eqref{iter_c1}.
\end{proof}
We can now conclude the proof of part (ii) of Theorem \ref{thm_main}. Without loss of generality we assume that $Q=Q_1(0,0)$. Recall that we need to show that for any $\gamma \in (0,1)$ there exists $\varepsilon >0$ such that $I(Q)\equiv \int_Q | v_x |^3 \leq \varepsilon$ implies that
\eqnb\label{to_show_conclusion}
\begin{aligned}
&\left| v_x (x,t) - v_x (y,s) \right| \leq C_\gamma \varepsilon^{1/3}  \left( |x-y| + |t-s |^{1/4} \right)^\gamma \quad \text{ for } (x,t),(y,s)\in Q_{\frac{1}{2}}.
\end{aligned}
\eqne

We first deduce from Corollary \ref{cor_iterate_in_theta} that
\eqnb\label{to_show_cor_holder_cnty}
\left( \fint_{Q_r(y,s)} \left| v_x - (v_x)_{Q_r(y,s)} \right|^3 \right)^{1/3}\leq C_\gamma \varepsilon^{1/3} r^\gamma
\eqne
for every $(y,s)\in Q_{1/2}$, $r\in (0,1/2)$, where $(v_x)_{Q_r(y,s)}\coloneqq \fint_{Q_r (y,s)} v_x$.

Indeed, let $\varepsilon_0, \theta \in (0,1/4)$ be given by Corollary \ref{cor_iterate_in_theta} applied with $M\coloneqq 1$, and let $\varepsilon \coloneqq  \varepsilon_0^{3}/16 $ and  
\[
u(x,t) \coloneqq v(y+x/2,s+t/16).
\]
Given $r\in (0,1/2)$ let $K \in \NN $ be such that
\[
\theta^{K+1} \leq r < \theta^{K}.
\]
By assumption \eqref{eq:sass} and Jensen's inequality
\[\begin{split}
\left| (u_x )_1 \right| &\leq \left( \fint_{Q_1} |u_x |^3 \right)^{1/3} = 2^{1/3}  \left( \int_{Q_{1/2}(y,s)} |v_x |^3 \right)^{1/3} \leq  2^{1/3} I(Q)^{1/3} \leq 2^{1/3} \varepsilon^{1/3} \leq 1 =M ,\\
 Y(1,u)&= \left( \fint_{Q_1} |u_x - (u_x)_1 |^3 \right)^{1/3} \leq 2 \left( \fint_{Q_1} |u_x |^3 \right)^{1/3} \leq 2^{4/3} \varepsilon^{1/3} =\varepsilon_0,
 \end{split}
\]
and so Corollary \ref{cor_iterate_in_theta} gives
\[
Y(\theta^K , u ) \leq \theta^{\gamma K } Y(1,u) \leq  \theta^{\gamma K } \varepsilon_0.
\]
Noting that $r/\theta^{K} \in (\theta ,1 )$ and that $| (u_x)_r - (u_x)_{\theta^K} | \leq  \theta^{-5} Y(\theta^K,u) $ (as in \eqref{means_bound_k_step} above) we obtain 
\[\begin{split}
\left( \fint_{Q_r} \left| u_x - (u_x)_r \right|^3 \right)^{1/3}  &\leq \left( \fint_{Q_r} \left| u_x - (u_x)_{\theta^K} \right|^3 \right)^{1/3} + \theta^{-5} Y(\theta^K,u) \\
&\leq (\theta^{-5/3} + \theta^{-5}  ) Y(\theta^K , u) \leq (\theta^{-5/3} + \theta^{-5}  ) \theta^{\gamma K} \varepsilon_0 \\
&\leq (\theta^{-5/3} + \theta^{-5}  ) \theta^{-\gamma } r^\gamma \varepsilon_0 =C \varepsilon^{1/3} r^\gamma
\end{split}\]
for every $r\in (0,1)$, where $C=C (\gamma ) $. (Recall that $\theta$ is fixed depending on $\gamma $, see \eqref{iter_c1}.) The claim \eqref{to_show_cor_holder_cnty}, follows by writing the above inequality in terms of $v$.\\

Using \eqref{to_show_cor_holder_cnty} we obtain \eqref{to_show_conclusion} by applying the following Campanato Lemma with $p=3$, $R=1$, cf.\ the original \cite{campanato} or Lemma A.2 in \cite{SGM}.
\begin{lemma}[Campanato]\label{Campanato}
  Let $R\in(0,1]$, $f\in L_{1,1}(Q_R(0))$ and suppose that there exist positive constants $\gamma \in(0,1]$, $N>0$, such that
  \[
  \left(\fint_{Q_r(z)}|f(y)-(f)_{Q_r(z)}|^p\,\d y\right)^{1/p}\le Nr^{\gamma}
  \]
  for any $z\in Q_{R/2}(0)$ and any $r\in(0,R/2)$, where $(f)_{Q_r(z)}\coloneqq \fint_{Q_r (z)} f$. Then $f$ is H\"older continuous in $Q_{R/2}(0)$ with 
  \[
  |f(x,t)-f(y,s)|\le cN(|x-y|+|t-s|^{1/\alpha})^\gamma
  \]
for all $(x,t),(y,s)\in Q_{R/2}(0)$.
\end{lemma}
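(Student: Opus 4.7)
The plan is to adapt the classical Campanato embedding to the biparabolic geometry, where the natural pseudo-distance is $d_B((x,t),(y,s)) \coloneqq |x-y|+|t-s|^{1/4}$, the cylinder volume scales as $|Q_r|=2r^5$, and consequently $|Q_r|/|Q_{r/2}|=32$.

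First I would produce a pointwise representative of $f$ on $Q_{R/2}(0)$ by showing that for each $z \in Q_{R/2}(0)$, the means $(f)_{Q_r(z)}$ form a Cauchy net as $r \to 0^+$. On a dyadic scale $r_k = 2^{-k}\rho$, the inclusion $Q_{r_{k+1}}(z)\subset Q_{r_k}(z)$ and Jensen's inequality give
\[
|(f)_{Q_{r_{k+1}}(z)} - (f)_{Q_{r_k}(z)}| \le 32\, \fint_{Q_{r_k}(z)} |f - (f)_{Q_{r_k}(z)}| \le 32\, N r_k^\gamma.
\]
Since $\gamma>0$, the geometric series $\sum_k r_k^\gamma$ converges, so $\tilde f(z) \coloneqq \lim_{r\to 0^+}(f)_{Q_r(z)}$ exists. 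Telescoping yields the quantitative decay
\[
|\tilde f(z) - (f)_{Q_r(z)}| \le C_\gamma N r^\gamma, \qquad r\in(0,R/2).
\]
The Lebesgue differentiation theorem with respect to biparabolic cylinders gives $\tilde f = f$ almost everywhere, so we may identify $f$ with the pointwise-defined $\tilde f$ henceforth.

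Next, given two points $z_1=(x,t)$, $z_2=(y,s)\in Q_{R/2}(0)$, set $d\coloneqq d_B(z_1,z_2)$ and assume without loss of generality $s\le t$. A direct verification, using $|x-y|\le d$ and $0\le t-s\le d^4$, shows that both $Q_d(z_1)$ and $Q_d(z_2)$ are contained in the enlarged cylinder $Q_{3d}(z_1)$. Applying Jensen's inequality again with the volume ratio $|Q_{3d}|/|Q_d|=3^5$,
\[
|(f)_{Q_d(z_i)} - (f)_{Q_{3d}(z_1)}| \le 3^5\, \fint_{Q_{3d}(z_1)} |f - (f)_{Q_{3d}(z_1)}| \le C\, N d^\gamma \qquad (i=1,2).
\]
Chaining $\tilde f(z_1) \to (f)_{Q_d(z_1)} \to (f)_{Q_{3d}(z_1)} \to (f)_{Q_d(z_2)} \to \tilde f(z_2)$ through the triangle inequality, and using the previous quantitative decay at $z_1$ and $z_2$ together with the above, delivers $|\tilde f(z_1)-\tilde f(z_2)|\le C_\gamma N d^\gamma$, which is exactly the claim (with $\alpha=4$).

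The main obstacle I anticipate is the one-sided, anisotropic nature of $Q_r(z)=(x-r,x+r)\times(t-r^4,t]$: the cylinders are not symmetric in time, which forces care both when choosing an enclosing cylinder that contains two nearby biparabolic balls (the factor $3$ in $Q_{3d}(z_1)$ is exactly what is needed), and when invoking Lebesgue differentiation with respect to this non-centred, anisotropic basis. Neither issue is truly substantial — the standard Vitali-type covering lemma for biparabolic cylinders covers the latter, and the former is a routine computation — but they are the only places where the argument differs from the classical isotropic Campanato lemma.
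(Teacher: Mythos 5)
The paper does not actually prove Lemma \ref{Campanato}; it refers the reader to \cite{campanato} and Lemma~A.2 of \cite{SGM}, so there is no in-paper argument to compare against. Your proof is the standard Campanato iteration transplanted to the one-sided biparabolic cylinders, which is the right approach, and the key computations are correct: the volume ratio $|Q_r|/|Q_{r/2}|=2^5$, the dyadic telescoping defining $\tilde f$ with the quantitative decay $|\tilde f(z)-(f)_{Q_r(z)}|\le C_\gamma Nr^\gamma$, the a.e.\ identification $\tilde f=f$ via differentiation along the (doubling, metric-ball-comparable) basis $Q_r(z)$, the inclusion $Q_d(z_1)\cup Q_d(z_2)\subset Q_{3d}(z_1)$ after ordering $s\le t$ so the one-sidedness in time is respected, and the reading $\alpha=4$.

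There is one genuine gap. Every use of the hypothesis requires the radius to lie in $(0,R/2)$, yet your two-point estimate invokes it for $Q_{3d}(z_1)$, and $d=|x-y|+|t-s|^{1/4}$ can be as large as $3R/2$ for $z_1,z_2\in Q_{R/2}(0)$; as written the argument therefore only covers pairs with $d<R/6$. For the remaining pairs you need to chain through a bounded number of intermediate points, for instance first moving in time from $(x,t)$ to $(x,s)$ in at most $6^4$ steps of temporal gap below $(R/12)^4$, and then in space from $(x,s)$ to $(y,s)$ in at most $12$ steps of length below $R/12$, all of which stay inside $Q_{R/2}(0)$. Since the step count is a universal constant, and in the regime $d\ge R/6$ one has $(R/12)^\gamma\le d^\gamma$, summing the close-pair estimate over consecutive links yields the full bound with a constant still independent of $R$ and $N$. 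Without this chaining the stated estimate on all of $Q_{R/2}(0)$ is not established; with it, your proof is complete.
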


\section{Appendices}

\subsection{Appendix on weak solutions}\label{sec_app_on_weak_sol}

\begin{proof}[Proof of Remark \ref{rem:str}]
The distributional formulation \eqref{weak_form_R} yields
\[
\left|\int_0^{\infty} \int_\RR v\, \phi_t \right|  \leq \int_0^{\infty} \left( \| v_{xx} (t)\|_{2} \| \phi_{xx} (t) \|_{2} + \| v_x (t) \|^2_{10/3} \| \phi_{xx} (t) \|_{5/2} \right) \d t.
\]
In particular, for $\phi (x,t) = \varphi (x) \chi (t)$, where $\chi \in C_0^\infty ((0,\infty ))$ and $\phi \in C_0^\infty (\RR)$, using \eqref{eq:mhi}, and the energy inequality \eqref{EI_on_R} 
\[
\left|\int_0^{\infty } \int_\RR v\, \varphi \chi_t \right|  \leq C_{u_0} \|  \varphi \|_{W^{5/2}_2}  \|  \chi \|_{L_\frac52},
\]
i.e.\ $v_t \in L_\frac53 ((0,\infty ); (W^{5/2}_2 (\RR))^*)$. Hence redefining $v$ on a set of measure zero we have $v \in C((0,T); (W^{5/2}_2 (\RR))^*)$. Since also $v \in L_{2,\infty}$, we have weak $L_2$ continuity of $v$, via e.g.\ Lemma 2.2.5.\ of \cite{Pokorny_notes}, which shows (iii).

We now verify that the energy inequality \eqref{EI_on_R} holds for every $t>0$. Choose any $t\geq 0$ and a sequence $t_n \to t^+$ such that the energy inequality holds at each $t_n$. Since $v(t_n) \rightharpoonup v(t)$ (by (iii)) we can take $\liminf_{t_n\to t}$ of the energy inequalities and use lower weak semicontinuity of the norm to write $\| v(t) \|^2 \leq \liminf_{t_n \to t^+} \| v(t_n ) \|^2 \leq \| v_0 \|^2 - 2 \int_0^t \int_{\RR } | v_{xx} |^2 $.

As for (iv), given $\phi\in C_0^\infty ((-1,\infty ) \times \RR )$ we can multiply $\phi$ by a cutoff in time (as in Lemma 3.14 in \cite{NSE_book}, for example) to obtain \eqref{weak_form_R_with_cutoff} for almost every $t>0$. Weak continuity (iii) guarantees that \eqref{weak_form_R_with_cutoff} holds for every $t>0$.

As for (v), note that (iv) implies weak convergence $v(t) \rightharpoonup v_0$ as $t\to 0^+$. Moreover $\| v(t) \| \to \| v_0 \|$ since $\liminf_{t\to 0^+} \| v(t) \|  \geq \| v_0 \| $ (from the weak convergence) and $\limsup_{t\to 0^+} \| v(t) \|  \leq \| v_0 \| $ (from the energy inequality \eqref{EI_on_R}), and so strong convergence follows, cf.\ p.106 in \cite{Seregin_notes}.
\end{proof}

\begin{proof}[Proof of Theorem \ref{thm:Cauchy}]
Given $l>0$ let $v_{0,l} \in C_0^\infty (-l,l)$ be such that $\| v_{0,l} \| \leq \| v_0 \|$ and $v_{0,l}\to v_0$ strongly in $L^2$ as $l\to \infty $, and let $T_l >0$ be such that $T_l \to \infty$ as $l\to \infty $. Let us denote by $\mathring W^2_2(-l,l)$ the completion of $C_0^\infty(-l,l)$ in the $W^2_2$ norm.
By a straightforward modification of the arguments in Theorem 4.3 in \cite{SteWin:05} and Theorem 2.4 in \cite{SGM}, for each $l$ there exists a suitable weak solution of the initial-boundary value problem
\[
\begin{cases}
&\p_t v_l + \p_{xxxx} v_l + \p_{xx} (v_l)^2 =0 \quad \text{ on } (-l,l)\times (0,T)\\
&v_l (0) = v_{0,l}\\
&v_l= \p_x v_l =0 \quad \text{ on } \{ -l,l \} \times (0,T),
\end{cases}
\]
namely there exists $v_l\in L_\infty ((0,T_l );L_2 (-l,l) ) \cap L_2 ((0,T_l); \mathring  W^2_2 (-l, l) )$ such that, after null-extending $v_l$ from $(-l,l)$ to $\RR$, we have 
\eqnb\label{weak_form_interval}
\int_0^\infty \int \left( v_l \phi_t - (\p_{xx} v_l) \phi_{xx} - (\p_x v_l)^2 \phi_{xx} \right) = \int v_{0,l}\, \phi (0)
\eqne
for every $\phi \in C_0^\infty ((-1,T_l)\times (-l,l) )$, and
\eqnb\label{LEI_interval}
 \int_0^\infty  \int (\p_{xx} v_l)^2 \phi \leq \int_0^\infty  \int \left( \frac{1}{2} ( \phi_t - \phi_{xxxx} ) v_l^2 +2 (\p_x v_l)^2 \phi_{xx} -\frac{5}{3} (\p_x v_l)^3 \phi_x - (\p_x v_l)^2 v_l \phi_{xx} \right) 
\eqne
for every $\phi \in C_0^\infty ((-1,T_l) \times (-l,l) )$ with $\phi \geq 0$, and
\eqnb\label{EI_for_uL}
\| v_l(t) \|^2 + 2\int_0^t \| \p_{xx} v_l (\tau ) \|^2 \d \tau \leq \| v_{0,l} \|^2 \leq \| v_{0} \|^2
\eqne
for almost every $t>0$. Here we wrote $\| \cdot \| \equiv \| \cdot \|_{L_2 (\RR )}$ for brevity. Consequently, there exists $v$ such that 
\eqnb\label{weak_conv_temp}
\begin{split}
v_l \stackrel{\ast }{\rightharpoonup }& v \qquad\hspace{0.3cm} \text{ in } L_\infty ((0,\infty ); L_2 )  \\
\p_{xx}v_l {\rightharpoonup }& v_{xx} \qquad \text{ in } L_2 ((0,\infty ); L_2 ) 
\end{split}
\eqne
as $l\to \infty$ and, via lower weak semi-continuity
\eqnb\label{EI_limit}
\| v(t) \|^2 + 2\int_0^t \| v_{xx} (\tau ) \|^2\d \tau  \leq \| v_{0} \|^2
\eqne
for almost every $t>0$, in particular $v \in L_\infty(0, \infty; {L}_2 (\RR)) \cap L_2 (0, \infty;   W^2_2(\RR) )$. The multiplicative inequality \eqref{eq:mhi} controls via \eqref{EI_limit} the nonlinear term, and thus for any $\phi \in C_0^\infty ((-1,\infty)\times \RR )$ we can pass to the limit in \eqref{weak_form_interval}. We obtained $v$ satisfying the definition of energy weak solution to the Cauchy problem \eqref{SGMC}.

Let us show that $v$ is a suitable weak solution. Fix any bounded cylinder $Q_z(r) \subset (-L,L) \times (0,T)$. Identity \eqref{weak_form_interval}, the energy bound \eqref{EI_for_uL}, and the reasoning as in proof of Remark  \ref{rem:str}  imply
\eqnb\label{ut_bound}
 \| v_t \|_{L_\frac53( 0,T; (W^{2}_{5/2} (-L,L))^*)} \d  \tau  \leq C_{T,L, \| v_0 \|}  ,
\eqne
with the only difference now being $C(L)$, due to the H\"older inequality $\| v_{xx} (t)\|_{L_\frac53 (-L,L)} \le (2L)^\frac15 \| v_{xx} (t)\|_{L_2 (-L,L)}$. Applying the Aubin-Lions Lemma (see for example Theorem 2.1 on p. 184 in \cite{temam}) with $X_0\coloneqq H^2 (-L,L)$, $X \coloneqq W^{1}_\infty (-L,L)$, $X_1 \coloneqq  (W^{2}_{5/2}(-L,L))^*$, $\alpha_0\coloneqq  2$, $\alpha_1 \coloneqq 5/3$, we see that (along a subsequence)
\[
v_l \to v \quad \text{ in }L_2 ((0,T);W^{1}_{\infty } (-L,L) ). \]
Thus we can take $\lim_{l\to \infty }$ on the r.h.s.\ of \eqref{LEI_interval} and $\liminf_{l\to \infty } $ on its l.h.s.\ to obtain via l.w.s.c.\
\eqnb\label{LEI_line}
 \int_0^L \int_{-L}^L  v_{xx}^2 \phi \leq  \int_0^L \int_{-L}^L \left( \frac{1}{2} ( \phi_t - \phi_{xxxx} ) v^2 +2 v_x^2 \phi_{xx} -\frac{5}{3} v_x^3 \phi_x - v_x^2 v_l \phi_{xx} \right)
\eqne
for any $\phi \in C_0^\infty (Q)$.
Applying a cut-off procedure in time, i.e.\ rewriting \eqref{LEI_line} for $\phi (x,t) \chi (t)$, with $\chi$ being a bump function around $t$, we obtain that 
\eqnb\label{LEI_line_with_t}
 \frac{1}{2} \int v(t)^2 \phi(t) + \int_0^t  \int  v_{xx}^2 \phi \leq \int_0^t  \int \left( \frac{1}{2} ( \phi_t - \phi_{xxxx} ) v^2 +2 v_x^2 \phi_{xx} -\frac{5}{3} v_x^3 \phi_x - v_x^2 v_l \phi_{xx} \right)
\eqne
for almost every $t>0$. More precisely, for every Lebesgue point of $f(t)\coloneqq \int v(t)^2 \phi(t) $, so the times $t>0$ where \eqref{LEI_line_with_t} holds  depends on the choice of $\phi$. However, since $v(t)$ is weakly continuous in $L_2$ (by (iii), cf.\ Remark \ref{rem:str}), the same is true of $v(t)\sqrt{\phi(t)}$ and so we can extend \eqref{LEI_line_with_t} for all $t>0$. 
\end{proof}

\subsection{Proof of Lemma \ref{lem_campanato_linear}}\label{sec_proof_linear_camp}
We show now a Campanato-type estimate for the linear equation  \eqref{eq:linb}.
We may assume that $u\in C^\infty (Q_{3/4})$. Otherwise first we use the standard mollification $u^{(\varepsilon )}$ and then our claim follows from taking the limit $\varepsilon\to 0$ in \eqref{campanato_linear}.

Letting $\phi \in C_0^\infty (Q_{3/4})$ be such that $\phi=1$ on $Q_{5/8}$, multiplying \eqref{eq:linb} by $u\phi$ and integrating by parts we obtain
\eqnb\label{LEI_temp}
\int_{Q_{3/4}}  u_{xx}^2\phi=\int_{Q_{3/4}}\left( \frac{1}{2}u^2(\phi_t-\phi_{xxxx} ) + u_x^2 \left( 2 \phi_{xx}+ \frac{3\beta }{2}\phi_x \right) - \beta u_x u \phi_{xx} \right)
\eqne
Thus
\eqnb\label{u_xx_is_in_L^2}
\| u_{xx}\|_{L_{2,2} (Q_{5/8} )} \leq C_{ \beta } \left(\|u\|_{L_{2,2}(Q_{3/4})}+\|u_x\|_{L_{2,2}(Q_{3/4})}\right).
\eqne
Since any space derivative $\p_x^m u$ ($m\geq 0$) satisfies \eqref{eq:linb} on $Q_{3/4}$ and any mixed derivatives $\p_t^k \p_x^m u$ ($k,m\geq 0$) can be expressed in terms of pure space derivatives via \eqref{eq:linb} itself, we obtain 
\eqnb\label{higher_derivs_in_L^2}
\| \p_t^k \p_x^m u\|_{L_{2,2} (Q_{1/2} )} \leq C_{k,m, \beta } \left(\|u\|_{L_{2,2}(Q_{3/4})}+\|u_{x}\|_{L_{2,2}(Q_{3/4})}\right)
\eqne
for any $k\geq 0$, $m\geq 2$, by bootstrapping the inequalities of type \eqref{u_xx_is_in_L^2} on a sequence of decreasing cylinders. An embedding and \eqref{higher_derivs_in_L^2} imply
\[
\| u_x \|_{W^{1,0}_{\infty, \infty} (Q_{1/2})} \leq C_\beta  \left(\|u\|_{L_{2,2}(Q_{3/4})}+\|u_{x}\|_{L_{2,2}(Q_{3/4})}\right).
\]
Hence
\[
\begin{split}
\left( \fint_{Q_\theta} \left| u_x - (u_x)_\theta \right|^p \right)^{1/p}&\leq \max_{Q_\theta }  \left| u_x - (u_x)_\theta \right|=  \max_{Q_\theta }  \left| u_x - u_x (z_0) \right| \leq C \theta  \| u_x \|_{W^{1,0}_{\infty,\infty} (Q_{1/2})}\\
&\leq C_\beta \theta  \left(\|u\|_{L_{2,2}(Q_{3/4})}+\|u_{x}\|_{L_{2,2}(Q_{3/4})}\right)
\end{split}
\]
since by Darboux property there exists $z_0\in Q_{\theta }$ such that $u_x (z_0)=(u_x)_{\theta } $. 
It remains to estimate $\|u\|_{L_{2,2}(Q_{3/4})}$ in terms of $\| u_x \|_{L_{2,2} (Q_1)}$ above. To this end we introduce $\hat u$ in place of $u$ in \eqref{LEI_temp} and along the lines of our proofs of Caccioppoli inequalities \eqref{eq:cacc} or \eqref{eq:caccp} (this case is easier, since problem is linear and solutions are smooth). Next we repeat the above proof with $\hat u$.

\bibliography{literature}{}
\end{document}